




\documentclass[11pt]{amsart}


\usepackage{amsmath, amsthm}
\usepackage{eucal}

\usepackage{palatino}
\usepackage{euler}

\usepackage{mathrsfs}

\usepackage{amssymb}
\usepackage{amscd}
\usepackage{latexsym}
\usepackage{epsfig}
\usepackage{graphicx}
\usepackage{amsfonts}
\usepackage{psfrag}
\usepackage{caption}
\usepackage{rotating}
\usepackage{mathtools}

\usepackage{fullpage}

\usepackage{pifont}

\usepackage{epsfig}

\newcommand{\acts}{\mathbin{\raisebox{-.5pt}{\reflectbox{\begin{sideways}
$\circlearrowleft$\end{sideways}}}}}

\usepackage{url}
\usepackage{cite}

\usepackage{dsfont}

\input xy
\xyoption{all}
\UseComputerModernTips


\numberwithin{equation}{section}
\numberwithin{figure}{section}
\numberwithin{table}{section}


\newtheorem{theorem}[equation]{Theorem}

\newtheorem{lemma}[equation]{Lemma}
\newtheorem{proposition}[equation]{Proposition}

\newtheorem{Claim}[equation]{Claim}

\theoremstyle{definition}
\newtheorem{definition}[equation]{Definition}
\newtheorem{remark}[equation]{Remark}
\newtheorem{example}[equation]{Example}
\newtheorem{nonexample}[equation]{Nonexample}


\makeatletter
\let\c@equation\c@figure
\makeatother

\makeatletter
\let\c@table\c@figure
\makeatother


\newcommand{\comeq}[1]{\hbox{{\footnotesize #1}}}

\newcommand{\C}{{\mathbb{C}}}

\newcommand{\Z}{{\mathbb{Z}}}
\newcommand{\Q}{{\mathbb{Q}}}
\newcommand{\R}{{\mathbb{R}}}

\newcommand{\T}{{\mathbb{T}}}
\renewcommand{\P}{{\mathbb{P}}}
\renewcommand{\SS}{{\mathbb{S}}}

\newcommand{\X}{{\mathcal{X}}}
\newcommand{\Y}{{\mathcal{Y}}}

\renewcommand{\to}{\longrightarrow}

\newcommand{\algt}{\mathfrak{t}}

\newcommand{\into}{\hookrightarrow}
\newcommand{\onto}{\twoheadrightarrow}

%
%

%
%

\setlength{\marginparwidth}{0.7in}

%
%



\begin{document}

\title[The topology of toric origami manifolds]{The topology of toric origami manifolds}

\author{Tara S. Holm}
\thanks{Tara Holm was partially supported by Grant \#208975 from the Simons Foundation and NSF Grant DMS--1206466.}
\thanks{Ana Rita Pires was partially supported by an AMS-Simons Travel Grant.}
\address{Department of Mathematics, Malott Hall, Cornell
  University, Ithaca, New York 14853-4201, USA}
\email{tsh@math.cornell.edu}
\urladdr{\url{http://www.math.cornell.edu/~tsh/}}

\author{Ana Rita Pires}
\address{Department of Mathematics, Malott Hall, Cornell
  University, Ithaca, New York 14853-4201, USA}
\email{apires@math.cornell.edu}
\urladdr{\url{http://www.math.cornell.edu/~apires/}}

\keywords{}
\subjclass[2010]{Primary: 53D20; Secondary: 55N91, 57R91}

\date{\today}


\begin{abstract}
A { folded symplectic form} on a manifold is a closed 2-form with the mildest possible degeneracy 
along a hypersurface.  
A special class of folded symplectic manifolds are the {origami} 
symplectic manifolds, 
studied by Cannas da Silva, Guillemin and Pires, who classified {toric origami manifolds} 
 by combinatorial  {origami templates}.  In this paper, we examine the topology of toric origami
 manifolds that have acyclic origami template and co\"orientable folding hypersurface.  We prove that
 the cohomology is concentrated in even degrees, and that the equivariant cohomology satisfies
 the GKM description. Finally we show that toric origami manifolds with co\"orientable folding hypersurface provide a class of examples of Masuda and Panov's torus manifolds.
\end{abstract}

\maketitle

\setcounter{tocdepth}{1}
\tableofcontents

\section{Introduction}\label{sec:intro}

Toric symplectic manifolds are a useful class of examples for testing general theories and making explicit
computations.  Statements and proofs of important theorems often simplify in the case of toric manifolds.  
Delzant's classification of toric symplectic manifolds in terms of convex polytopes allows the translation of
geometric and topological questions into combinatorial ones.  In this paper, we study toric actions in the
category of folded symplectic manifolds.  Relaxing the requirement that the manifold be symplectic broadens
the class of manifolds with toric actions.  The mildest degeneracy is to allow the $2$-form to be zero along
a hypersurface.  In this instance, there remains enough geometric structure to be able to
classify such toric origami manifolds combinatorially.  

In this paper, we study the topology of  a particular class of toric origami manifolds, those with acyclic template
and co\"orientable fold.  For such manifolds, we prove that the ordinary cohomology is concentrated in even
degrees (Theorem~\ref{thm:even cohomology}).  This allows us to deduce a variety of facts about the
equivariant cohomology of these manifolds, and in particular to describe the equivariant cohomology ring
combinatorially (Theorem~\ref{thm:origami GKM}).
Our class of toric origami manifolds does fit into the framework of torus manifolds (Theorem~\ref{thm:locstd}).  The origami structure
allows us to give explicit inductive proofs.  We plan to use similar geometric techniques to study the non-co\"orientable
and non-acyclic cases.  We hope that this approach will also generalize to a class of torus manifolds that arise from
combinatorial origami templates, in the same way that some torus manifolds arise from combinatorial polytopes.

The remainder of this paper is organized as follows.  In Section~\ref{sec:background}, we review the symplectic and folded 
symplectic geometry underlying our work.  We then provide a framework for computing the ordinary and equivariant 
cohomology of origami manifolds with co\"orientable folding hypersurface and acyclic template in Sections~\ref{se:even} and 
\ref{se:eq-coh}.  
In Section~\ref{sec:std} we describe the relationship of our work with the toric topology literature.

\smallskip 

\noindent {\bf Acknowledgements.}  We are grateful to Jean-Claude Hausmann for his help and patience when we
were sorting out the commutativity of diagram~\eqref{eq:comm diag PD}; and to Nick Sheridan for his suggestions
regarding Definition~\ref{def:template}.  We would also like to thank Ana Cannas da Silva, 
Victor Guillemin,  Allen Hatcher, Yael Karshon, Allen Knutson, Tomoo Matsumura, and Milena Pabiniak
for many helpful conversations.   We are very grateful for the comments from the
anonymous referees, which led to several improvements of this article.

\section{Origami manifolds}\label{sec:background}

\subsection{Symplectic manifolds.}\label{se:symplectic}
We begin with a very quick review of symplectic geometry, following \cite{ca:book}.
Let $M$ be a manifold equipped with a {\bf symplectic form} 
$\omega\in \Omega^2(M)$: that is, $\omega$ is closed ($d\omega = 0$) and non-degenerate.
In particular, the non-degeneracy condition implies that $M$ must be an even-dimensional manifold.  The simplest examples include
\begin{enumerate}
\item $M=\SS^2 = \C \P^1$ with $\omega_p (\X,\Y)=$ signed area of the parallelogram spanned 
by $\X$ and $\Y$;

\item $M$ any compact orientable surface with $\omega$  the area form; and

\item $M= \R^{2d}$ with $\omega = \sum dx_i\wedge dy_i$.  The Darboux Theorem says that every symplectic manifold has local co\"ordinates so that
$\omega$ is of this standard form.
\end{enumerate}

Suppose
that a compact connected abelian Lie group $\T= (\SS^1)^n$ acts on $M$ preserving $\omega$.
The action is {\bf weakly Hamiltonian} if for every vector $\xi\in\algt$ in the Lie algebra 
$\algt$ of $\T$, the vector field
$$
\X_\xi(p) = \frac{d}{dt}\Big[ \exp (t\xi)\cdot p \Big] \bigg|_{t=0}
$$
is a {\bf Hamiltonian vector field}. That is, we require $\omega(\X_\xi, \cdot )$ to be an exact one-form\footnote{\, The one-form $\omega(\X_\xi, \cdot )$ is automatically closed because the action preserves $\omega$.}:
\begin{equation}\label{eq:mmap}
\omega(\X_\xi, \cdot ) = d\phi^\xi.
\end{equation}
Thus each $\phi^\xi$ is a smooth function on $M$ defined by the differential equation \eqref{eq:mmap}, so determined 
up to a constant.  Taking them together, we may define a {\bf moment map}
$$
\begin{array}{rcc}
\Phi: M  & \to & \algt^* \\
 p & \mapsto & \left(\begin{array}{rcl}
                \algt & \longrightarrow & \R \\
                \xi & \mapsto & \phi^\xi(p)
                \end{array}\right).
\end{array}
$$

The action is {\bf Hamiltonian} if the moment map $\Phi$ can be chosen to be a $\T$-invariant map. Atiyah and Guillemin-Sternberg have shown that when $M$ is a compact Hamiltonian $\T$-manifold, the image $\Phi(M)$ is a convex polytope,
and is the convex hull of the images of the fixed points $\Phi(M^{\T})$ \cite{at:convexity, gu-st:convexity}.

For an {\bf effective}\footnote{ \, An action is effective if no non-trivial 
subgroup acts trivially.} Hamiltonian $\T$ action on $M$, 
$
\dim(\T)\leq \frac{1}{2}\dim(M).
$
We say that the action is {\bf toric} if this inequality is in fact an equality.  
A symplectic manifold $M$ with a toric Hamiltonian $\T$ action is called a {\bf symplectic toric manifold}.  
Delzant used the moment polytope to classify symplectic toric manifolds.  

A polytope $\Delta$ in $\R^n$ is {\bf simple} if there are $n$ edges incident to each vertex, and it is {\bf rational} 
if each edge vector  has rational slope: it lies in $\Q^n\subset \R^n$.  
A simple polytope is {\bf smooth at a vertex} if the $n$ primitive vectors parallel to the edges at the vertex span the lattice 
$\Z^n\subseteq\R^n$ over $\Z$. It is {\bf smooth} if it is smooth at each vertex.  
A simple rational smooth convex polytope is called a {\bf Delzant polytope}.
We may now state Delzant's result.

\begin{theorem}[Delzant \cite{de:hamiltoniens}]
There is a one-to-one correspondence
$$
\left\{\begin{array}{c}
\mbox{compact toric}\\
\mbox{symplectic manifolds}\\
\end{array}\right\}
\leftrightsquigarrow
\left\{\begin{array}{c}
\mbox{Delzant polytopes}
\end{array}\right\} ,
$$
up to equivariant symplectomorphism on
the left-hand side and affine equivalence on the right-hand side.
\end{theorem}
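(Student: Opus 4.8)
The plan is to establish the bijection in both directions and check that the two assignments are mutually inverse up to the stated equivalences. In one direction a compact symplectic toric manifold $(M,\omega,\T,\Phi)$ is sent to $\Delta=\Phi(M)$, which must be shown to be Delzant; in the other a Delzant polytope $\Delta$ is sent to a canonical compact symplectic toric manifold $M_\Delta$ built by symplectic reduction (the \emph{Delzant construction}). The round trip $\Delta\mapsto M_\Delta\mapsto\Phi(M_\Delta)$ will be the identity by a direct computation of the moment image of $M_\Delta$, while the round trip $M\mapsto\Phi(M)\mapsto M_{\Phi(M)}$ reduces to a uniqueness statement: a compact symplectic toric manifold is determined up to equivariant symplectomorphism by its moment polytope.

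\emph{From manifolds to polytopes.} By the Atiyah--Guillemin--Sternberg convexity theorem \cite{at:convexity,gu-st:convexity} recalled above, $\Delta=\Phi(M)$ is a convex polytope, the convex hull of $\Phi(M^\T)$. To see that it is simple, rational, and smooth I would apply the equivariant Darboux / local normal form theorem near a fixed point $p$: a $\T$-invariant neighborhood of $p$ is equivariantly symplectomorphic to a neighborhood of the origin in $(\C^n,\omega_{\mathrm{std}})$ with a linear $\T$-action of weights $\alpha_1,\dots,\alpha_n\in\Z^n$, and effectiveness together with $\dim\T=\tfrac12\dim M$ forces the $\alpha_j$ to be a $\Z$-basis of $\Z^n$. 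Since in these coordinates $\Phi$ has the form $z\mapsto\Phi(p)\pm\tfrac12\sum_j|z_j|^2\alpha_j$, the image near $\Phi(p)$ is a translate of the cone on the $\alpha_j$; hence at every vertex $\Delta$ has exactly $n$ edges whose primitive edge vectors form a lattice basis, so $\Delta$ is Delzant.

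\emph{The Delzant construction.} Given a Delzant polytope $\Delta=\{x\in(\R^n)^*:\langle x,u_i\rangle\le\lambda_i,\ i=1,\dots,d\}$ with $u_i\in\Z^n$ the primitive outward facet normals, let $\pi\colon\R^d\to\R^n$ send the standard basis vector $e_i$ to $u_i$. Smoothness of $\Delta$ gives $\pi(\Z^d)=\Z^n$, so $\pi$ descends to a surjection $\T^d\to\T^n$ whose kernel is a subtorus $N$ of dimension $d-n$. On $(\C^d,\omega_{\mathrm{std}})$ with the standard $\T^d$-action the map $z\mapsto\tfrac12(|z_1|^2,\dots,|z_d|^2)-(\lambda_1,\dots,\lambda_d)$ is a moment map; composing with the projection $(\R^d)^*\to\mathfrak n^*$ dual to $\mathfrak n\hookrightarrow\R^d$ yields a moment map $\mu_N$ for the $N$-action. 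One checks that $0$ is a regular value of $\mu_N$, that $N$ acts freely on $Z:=\mu_N^{-1}(0)$, and that $Z$ is compact --- the first two statements reducing, via the vertex picture, exactly to the Delzant condition, and compactness to the boundedness of $\Delta$. Then $M_\Delta:=Z/N$ is a compact symplectic $2n$-manifold on which the residual torus $\T^n=\T^d/N$ acts in a Hamiltonian, toric fashion, and computing the induced moment map shows its image is precisely $\Delta$; this also settles that $\Delta\mapsto M_\Delta\mapsto\Phi(M_\Delta)$ is the identity.

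\emph{Uniqueness, the main obstacle.} It remains to show that any compact symplectic toric manifold $M$ with moment polytope $\Delta$ is equivariantly symplectomorphic to $M_\Delta$, and this is the hard part. I would argue by induction over the strata of $\Delta$: over the interior, $\Phi^{-1}(\mathrm{int}\,\Delta)$ is a free Hamiltonian $\T^n$-space, hence a principal $\T^n$-bundle over the contractible base $\mathrm{int}\,\Delta$, equivariantly symplectomorphic (via action--angle coordinates) to a standard model, and identically so for $M_\Delta$; one then extends the resulting equivariant symplectomorphism across the codimension-one faces and, inductively, all lower strata, using the local normal form near each orbit to produce a local identification with the same standard model and an equivariant Moser argument to glue it to the piece already constructed while preserving $\Phi$. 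The crux is controlling this patching: the interior trivialization and the local models along faces of every codimension must be chosen coherently so that the pieces assemble into one global equivariant symplectomorphism, and it is precisely the smoothness hypothesis that makes each local model literally the standard one and annihilates the gluing obstructions.
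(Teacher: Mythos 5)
The paper does not prove this theorem; it is quoted as background directly from Delzant's paper \cite{de:hamiltoniens}, so there is no in-paper argument to compare against. Your outline is the standard proof: Atiyah--Guillemin--Sternberg convexity plus the equivariant local normal form at a fixed point to show $\Phi(M)$ is Delzant; the Delzant construction $M_\Delta=\mu_N^{-1}(0)/N$ realizing each Delzant polytope, with the smoothness condition giving exactly the regularity of $0$ and the freeness of the $N$-action on the level set; and a uniqueness statement to close the loop. All of that is correct in outline, and the one small gap in the forward direction (why the isotropy weights at a fixed point form a $\Z$-basis rather than merely a finite-index sublattice) is easily filled: a finite subgroup acting trivially near a fixed point acts trivially on all of the connected manifold $M$, contradicting effectiveness.

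The place where your proposal does not yet contain the idea that makes the theorem work is the uniqueness step, and your closing sentence misattributes the mechanism. Smoothness of $\Delta$ is what guarantees that every point of $M$ has a neighborhood equivariantly symplectomorphic to the standard local model determined by the corresponding face of $\Delta$; it does not, by itself, ``annihilate the gluing obstructions.'' The local identifications over an open cover $\{U_\alpha\}$ of $\Delta$ differ on overlaps by automorphisms of the local model commuting with the moment map, and these transition data form a \v{C}ech $1$-cocycle with values in the sheaf of such automorphisms (concretely, built from maps to $\T=\R^n/\Z^n$, or equivalently from closed, here exact, one-forms on the base). Delzant's argument is that this cohomology vanishes because $\Delta$ is \emph{convex}, hence contractible: $H^1(\Delta;\R^n)=H^2(\Delta;\Z^n)=0$, so the cocycle can be trivialized and the local symplectomorphisms patched into a global one. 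So the inductive-over-strata Moser scheme you sketch can be made to work, but the obstruction-vanishing you need comes from convexity of the polytope, not from the Delzant smoothness condition, and without naming that cohomological input the ``crux'' paragraph remains an assertion rather than a proof.
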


\subsection{Origami manifolds.}
We now relax the non-degeneracy condition on $\omega$, following \cite{CGP:origami}.
A {\bf folded symplectic form} on a $2n$-dimensional manifold $M$ is a $2$-form
$\omega\in \Omega^2(M)$ that is closed ($d\omega = 0$),  
whose top power $\omega^n$ intersects the zero section
transversely on a subset $Z$
and whose restriction to points in $Z$ has maximal rank.
The transversality forces $Z$ to be a codimension $1$ embedded submanifold of $M$.  We call $Z$
the {\bf folding hypersurface} or {\bf fold}.

The simplest examples of folded symplectic manifolds include the following.
\begin{enumerate}
\item Euclidean space $M= \R^{2d}$ has folded symplectic form $\omega = x_1 dx_1\wedge dy_1 + \sum_{i=2}^d dx_i\wedge dy_i$.  The Folded 
Darboux Theorem says that 
at points in $Z=\{ x_1=0\}$,  every folded symplectic manifold has local co\"ordinates so that
$\omega$ is of this standard form \cite[IIIA.4.2.2]{ma:formes}.

\item Any even-dimensional sphere $M=\SS^{2n}\subset \C^{n}\oplus \R$ may be equipped with the 
form $\omega_{\C^n}\oplus 0$.  The folding hypersurface is
the equator $Z = \SS^{2n-1}\subset \C^n\oplus \{ 0\}$.

\item Any compact surface $M$ can be equipped with a folded symplectic form with $Z$ a union of circles. See, for instance, Example 3.19 of \cite{CGP:origami}, and use Remark 2.33 of the same paper together with the classification of closed surfaces. This includes non-orientable surfaces. For example, $\R P^2$ can be equipped with a folded symplectic form so that $Z$ is a single circle.
\end{enumerate}

Let $i:Z\into M$ be the inclusion of $Z$ as a submanifold of $M$.
Our assumptions imply that $i^*\omega$ has a $1$-dimensional kernel on $Z$.
This line field is called the {\bf null foliation} on $Z$.
An {\bf origami manifold} is a folded
symplectic manifold $(M, \omega)$ whose null foliation
is fibrating: $Z\stackrel{\pi}{\to} B$ is a fiber bundle with orientable circle fibers
over a compact base $B$.
The form $\omega$ is called an {\bf origami form}
and the bundle $\pi$
is called the {\bf null fibration}. A diffeomorphism between two origami manifolds which intertwines the origami forms is called an {\bf origami-symplectomorphism}.
In the examples above, the first is not origami because the fibers are $\R$ rather than $\SS^1$, but the second and third
are origami. 
In the second example, the null fibration is the Hopf bundle $\SS^{2n-1}\to \C P^{n-1}$, and in the
third example, the base $B$ consists of isolated points.

The definition of a Hamiltonian action only depends on $\omega$ being closed.  Thus, in the folded framework, we may
define moment maps and toric actions exactly as in Section~\ref{se:symplectic}.
For example, the action $\T^2\acts \SS^4\subset \C^2\oplus \R$ given by rotation on the 
$\C^2$ co\"ordinates is Hamiltonian with moment map
$$
\Phi ( z_1, z_1, t) = \left( |z_1|^2,|z_2|^2\right) .
$$
The image of this map is shown in Figure~\ref{fig:S4} below.

\begin{figure}[ht]
\centering
{
\includegraphics[scale=0.4]{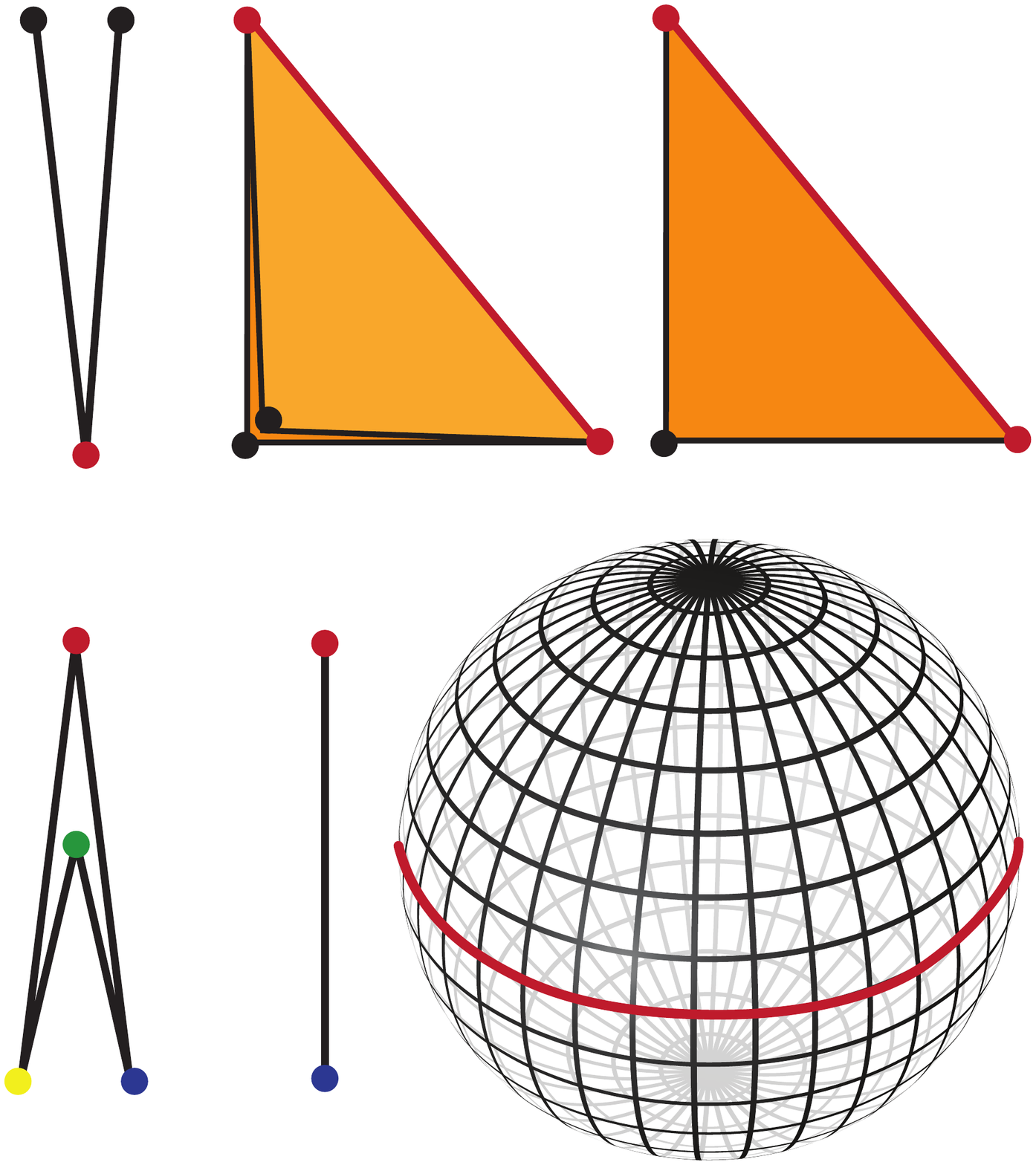} 
}
\caption{The moment map image for the $\T^2$ action on $\SS^4$.  The image consists of two overlapping copies of a triangle, which we have slightly unfolded.  The red hypotenuse is the image of the equator $\SS^3$.
Every other point in the image has two connected components mapping to it, one from the northern hemisphere and the other from the southern.
}
\label{fig:S4}
\end{figure}

An oriented origami manifold $M$ with fold $Z$ may be \textbf{unfolded} into a symplectic manifold as follows. 
Consider the closures of the connected components of $M\setminus Z$, a manifold with boundary
which consists of two copies of $Z$. 
We collapse the fibers of the null fibration by identifying the boundary points that are in the same fiber of the 
null fibration of each individual copy of $Z$. 
The result, $M_0:=(M\setminus Z) \cup B_1 \cup B_2$, is a (disconnected) smooth manifold that can be naturally endowed 
with a symplectic form which on $M_0\setminus (B_1 \cup B_2)$ coincides with the origami form on $M\setminus Z$. 
Because this can be achieved using symplectic cutting techniques, the resulting manifold $M_0$ is called the 
\textbf{symplectic cut space} (and its connected components the \textbf{symplectic cut pieces}), and the process 
is also called \textbf{cutting}. An example of cutting a $2$-torus is shown in Figure~\ref{fig:cut-torus}.
The symplectic cut space of a nonorientable origami manifold is the $\Z_2$-quotient 
of the symplectic cut space of its orientable double cover.

\begin{center}
\begin{figure}[h]
\includegraphics[height=0.65in]{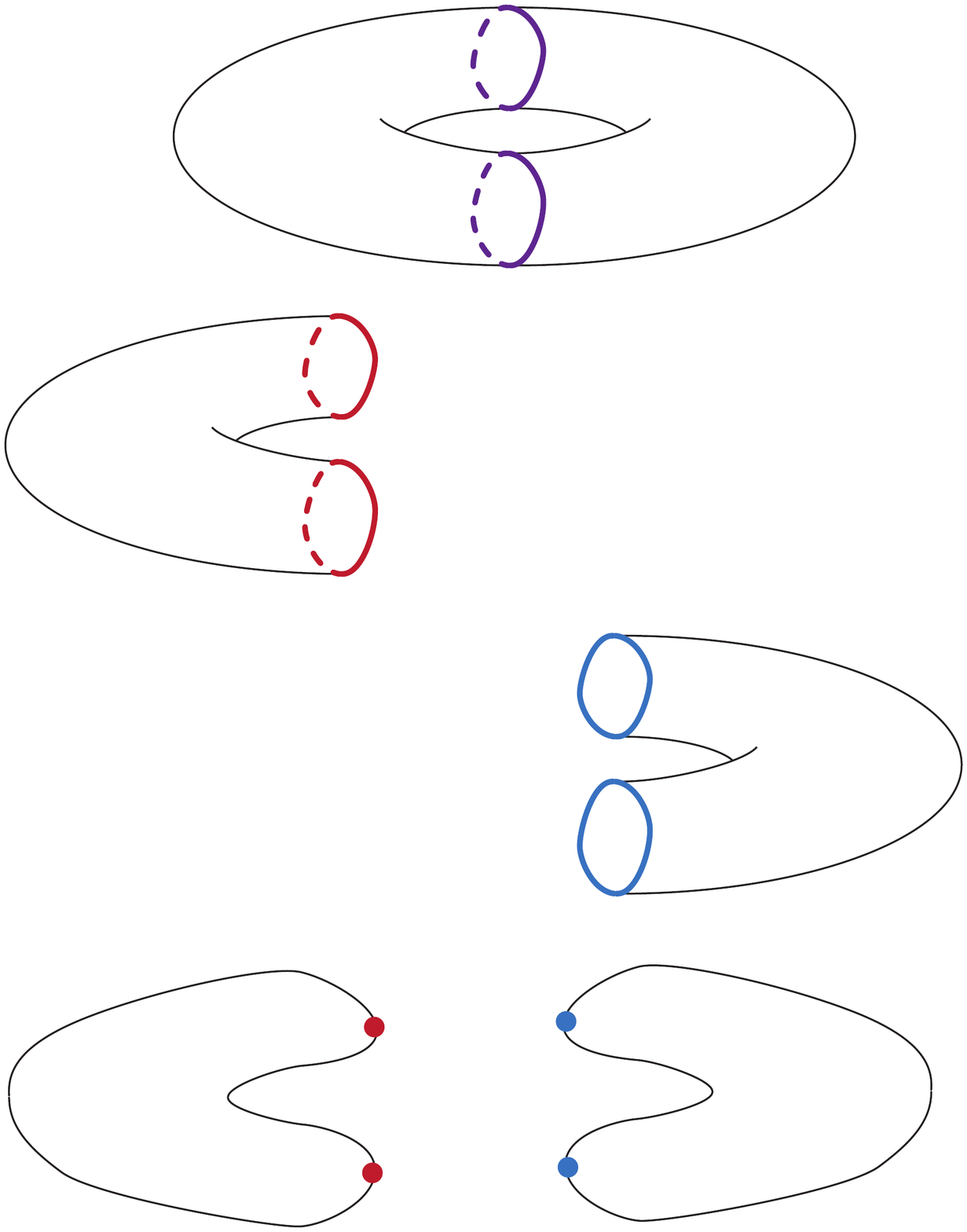} \hskip 0.1in
\includegraphics[height=0.65in]{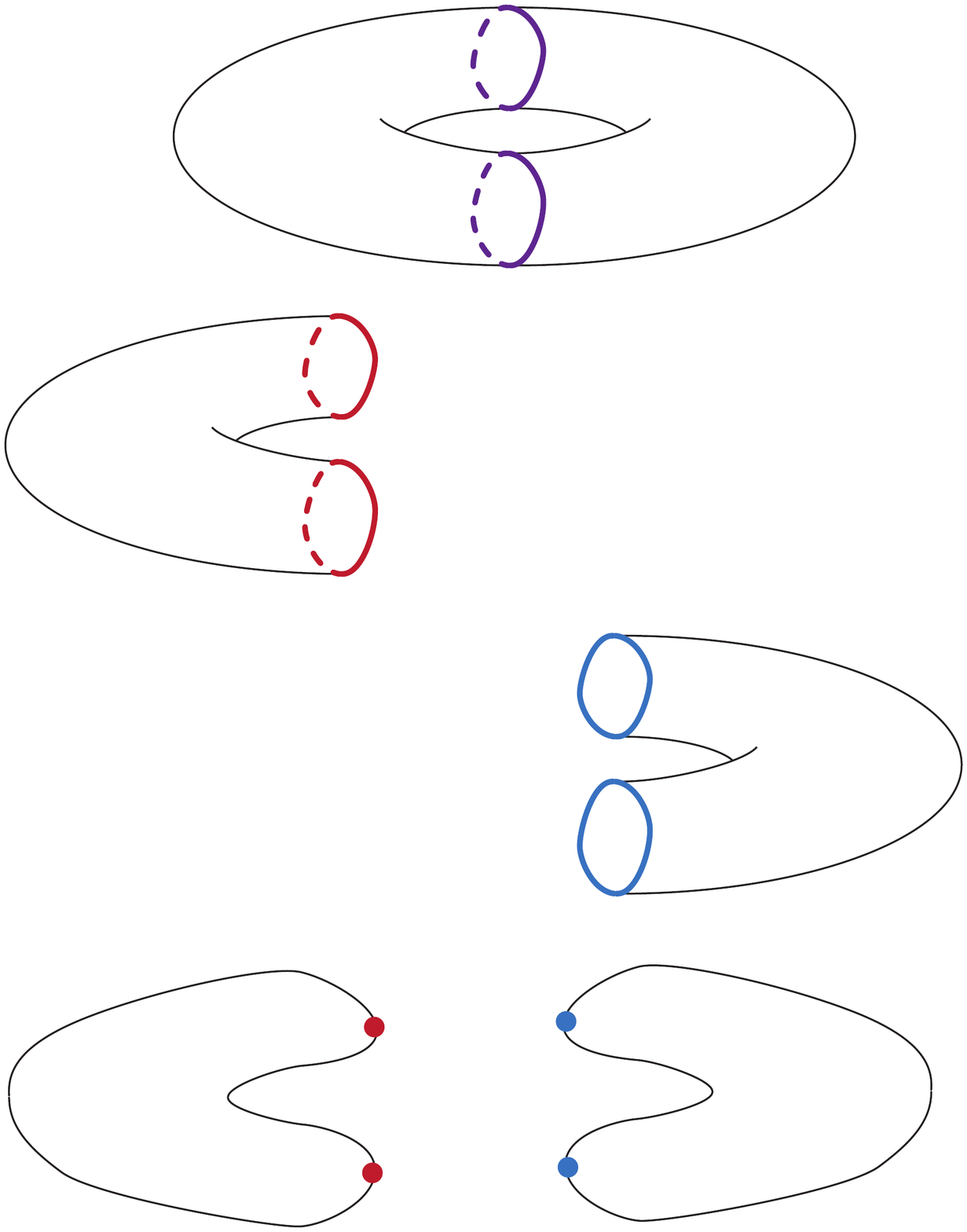} 
\includegraphics[height=0.65in]{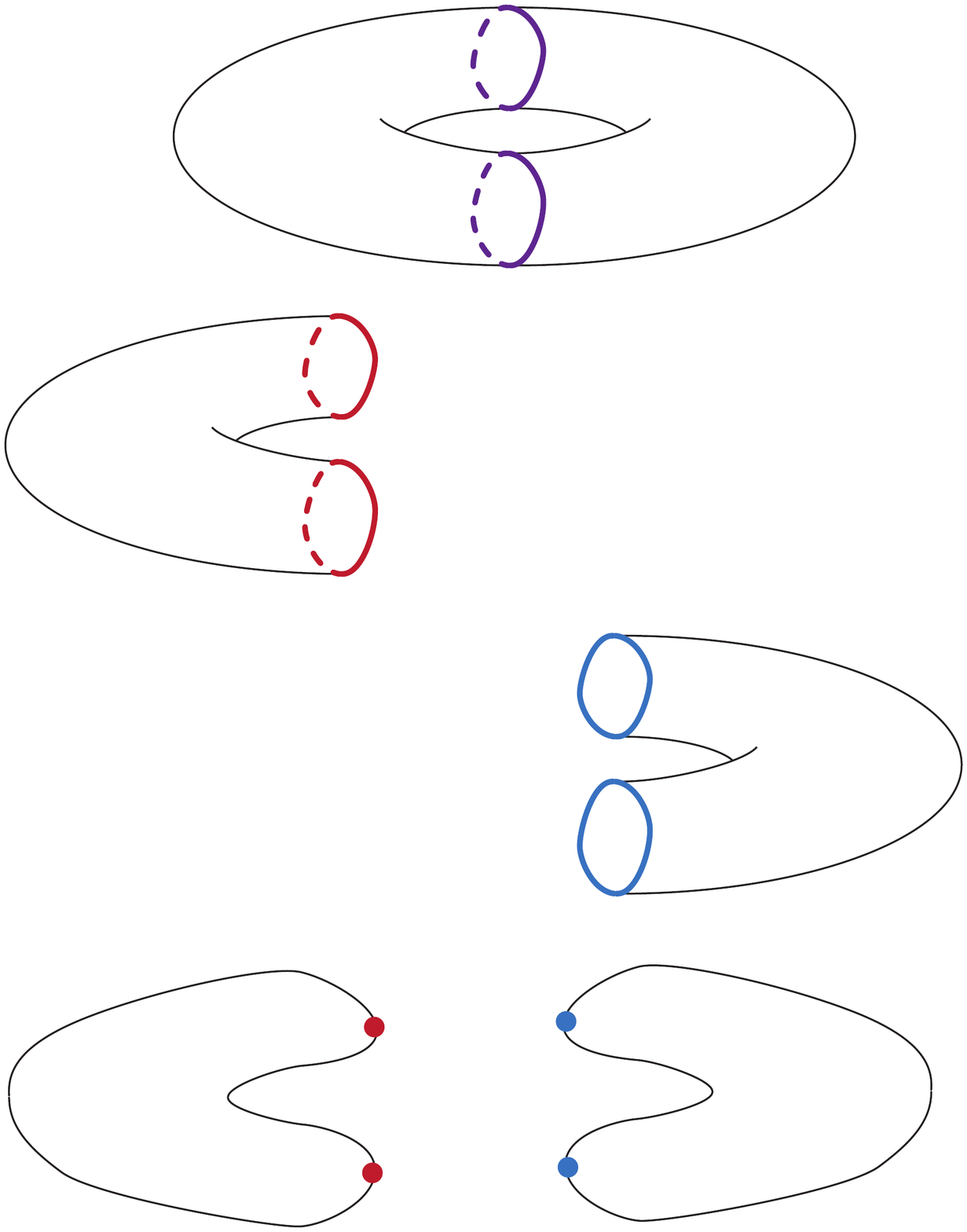} \hskip 0.1in
\includegraphics[height=0.65in]{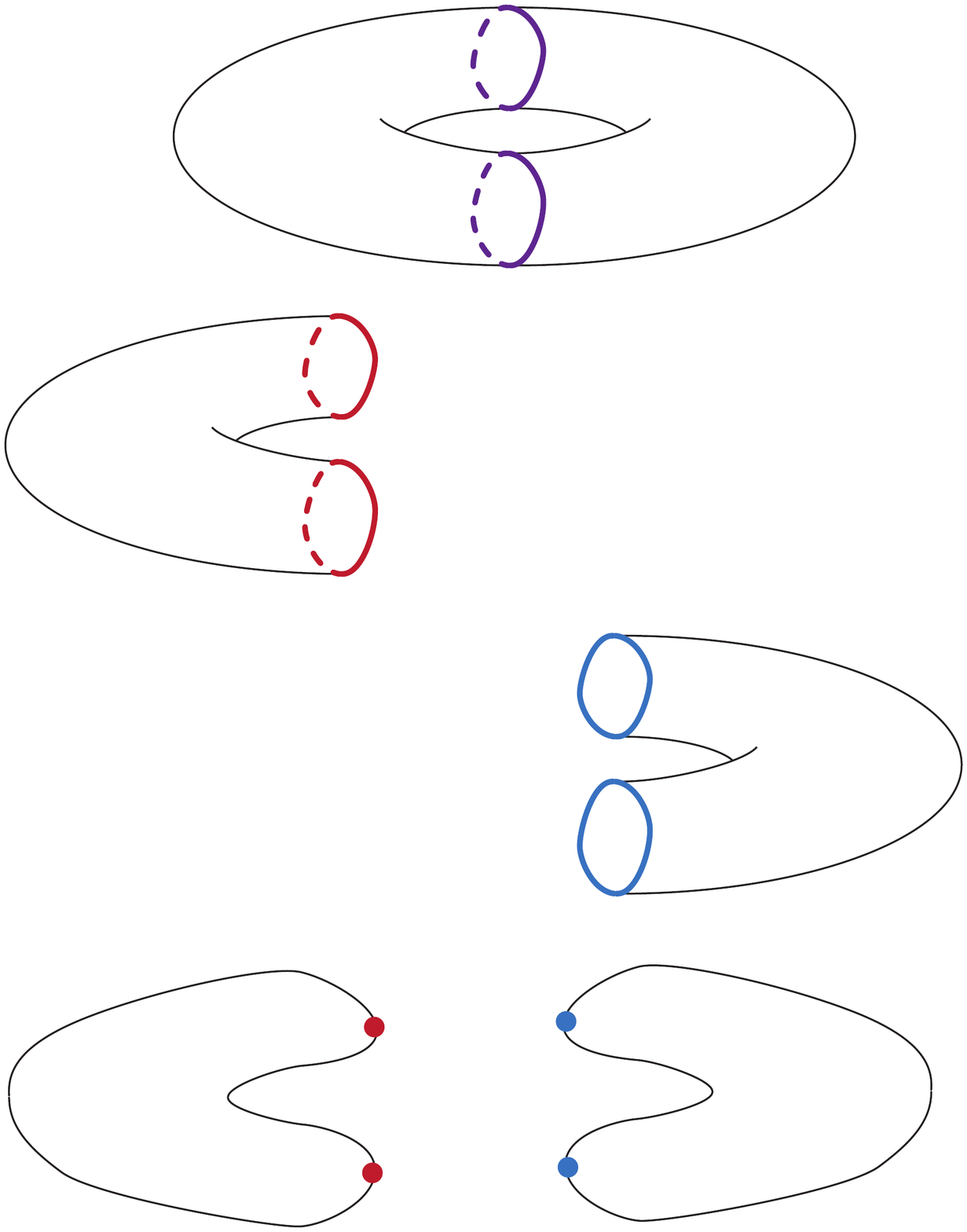} 
\includegraphics[height=0.65in]{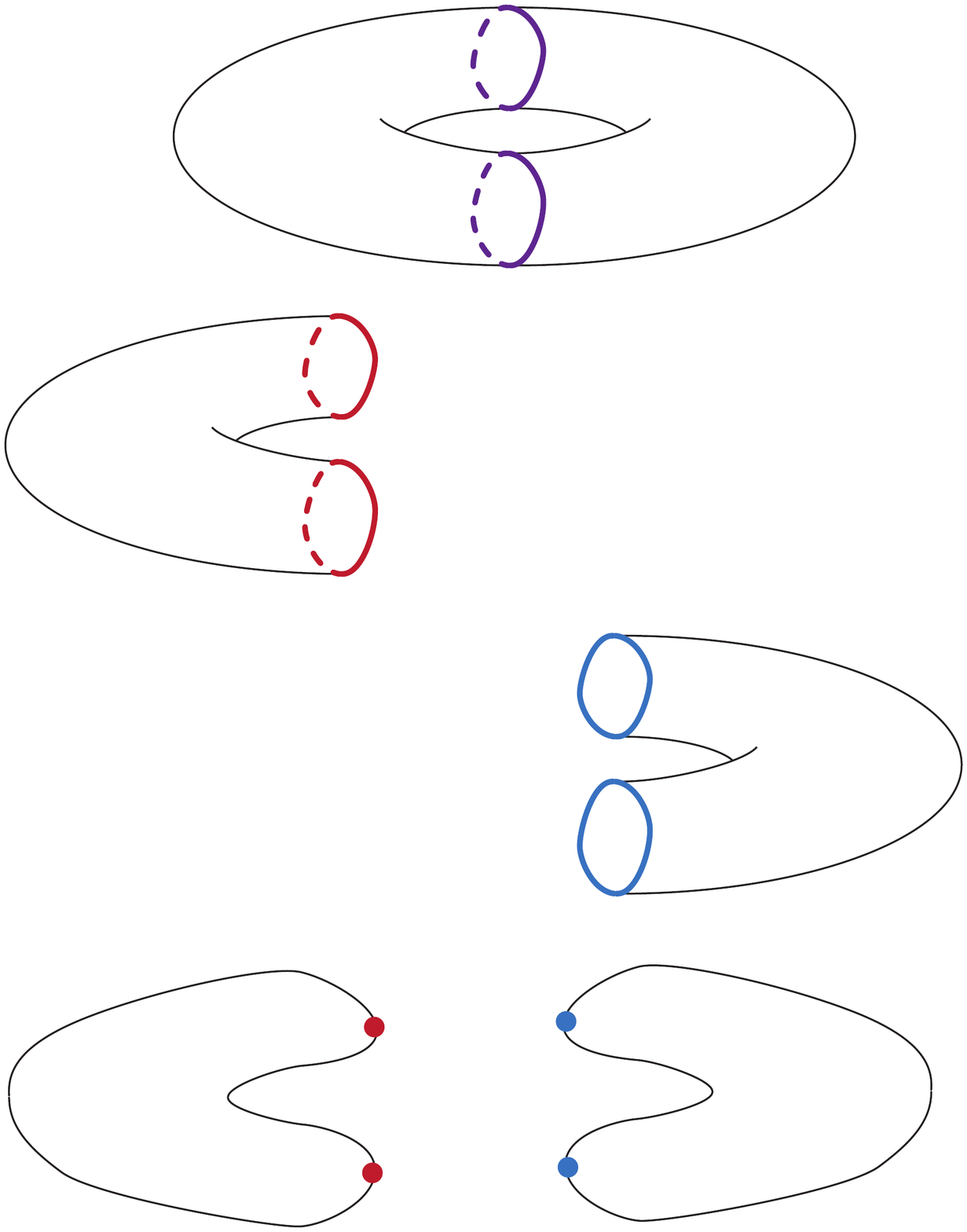} 
\caption{
The torus, with fold $Z=\SS^1\cup \SS^1$ in purple; the middle step before collapsing, 
the two copies of $Z$ are in blue
and purple; and the final cut space $M_0=\SS^2\cup \SS^2$ with $B_1$ in red and $B_2$ in blue.}\label{fig:cut-torus}
\end{figure}
\end{center}

In the example shown in  Figure~\ref{fig:S4}, unfolding the origami $\SS^4$ yields $\C\P^2\sqcup\overline{\C\P^2}$. This is suggested by the image of the moment map: the moment image of each toric $\C\P^2$ (regardless of orientation) is a triangle. The cut space $M_0$ of an oriented origami manifold $(M,\omega)$ inherits a natural orientation. It is the orientation on $M_0$ induced from the orientation on $M$ that matches the symplectic orientation on the symplectic cut pieces corresponding to the subset of $M\setminus Z$ where $\omega^n>0$ and the opposite orientation on those pieces where $\omega^n<0$. In this way, we can associate a $+$ or $-$ sign to each of the symplectic cut pieces of an orientend origami manifold, as well as to the corresponding connected components of $M\setminus Z$.

\begin{remark}\label{rmk:orientable}
In this paper we restrict to origami manifolds whose fold is {\bf co\"orientable}: that is, the fold has an orientable neighborhood. Note that this not imply that the manifold is orientable. Indeed, for an orientable $M$, the condition that $\omega^n$ intersects the zero section transversally implies that the connected components of $M\setminus Z$ which are adjacent in $M$ have opposite signs. Since $M$ is connected, picking a sign for one connected component of $M\setminus Z$ determines the signs for all other components. As a consequence, an origami manifold  $M$ with co\"orientable fold is orientable if and only if it is possible to make such a global choice of signs for the connected components of $M\setminus Z$. 
The moment image of a non-orientable origami manifold 
that nevertheless has co\"orientable fold
is given in Figure~\ref{fig:non-acyclic}.
\end{remark}

\begin{proposition}[\!\! {\cite[Props.\ 2.5 \&  2.7]{CGP:origami}}] \label{prop:model}
Let $M$ be a (possibly disconnected) symplectic manifold with a codimension two symplectic submanifold $B$ and a symplectic involution $\gamma$ of a tubular neighborhood $\mathcal{U}$ of $B$ which preserves $B$\footnote{\, In the nonco\"orientable case, the involution must satisfy  additional conditions, see \cite[Def.\ 2.23]{CGP:origami}. In the co\"orientable case, we have $B=B_1\cup B_2$ and the involution $\gamma$ maps a tubular neighborhood of $B_1$ to one of $B_2$ and vice versa.}.
Then there is an origami manifold $\widetilde{M}$ such that $M$ is the symplectic cut space of $\widetilde{M}$.  Moreover, this manifold is unique up to origami-symplectomorphism. 
\end{proposition}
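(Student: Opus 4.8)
The plan is to show that the cutting procedure described above can be reversed; the origami manifold $\widetilde M$ we must produce is the \emph{radial blow-up} of $M$ along $B$. First I would reduce to a local model near $B$. By the symplectic neighborhood theorem I may assume that $\mathcal U$ is a neighborhood of the zero section in the symplectic normal bundle $p\colon N\to B$, a rank-two symplectic bundle (equivalently, a Hermitian line bundle), carrying a symplectic form $\omega_N$ that restricts to $\omega|_B$ on the zero section and to the linear symplectic form on the fibers. In the co\"orientable case $B=B_1\sqcup B_2$, and $\gamma$ supplies a symplectomorphism between a neighborhood of $B_1$ and one of $B_2$; so it is enough to analyze the model $(N_1,\omega_{N_1})$ over $B_1$ and to keep track of the fact that $\gamma$ identifies the data over $B_2$ with it.

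Next I would unfold this local model. Choosing a fiberwise Hermitian metric compatible with the symplectic and complex structures gives a radial coordinate $r$ on the fibers and an identification $N_1\setminus B_1\cong \SS(N_1)\times(0,\varepsilon)$ with the unit circle bundle. The point of the local computation is that, after a relative Moser argument on the disk bundle, $\omega_{N_1}$ can be put into a normal form near the zero section in which the only dependence on the fiber radius is through $r^2$ (schematically $p^*(\omega|_{B_1})+\tfrac12\,d(r^2)\wedge\alpha$ for a connection one-form $\alpha$, up to terms vanishing to higher order in $r$), so that the same expression extends smoothly across $\SS(N_1)\times(-\varepsilon,\varepsilon)$ to a closed two-form $\widetilde\omega$ whose top power is a nonvanishing multiple of $r$. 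Thus $\widetilde\omega$ is a folded symplectic form with fold $Z:=\SS(N_1)\times\{0\}$, whose null foliation is the circle fibration $\SS(N_1)\to B_1$; since $B$ is compact with oriented circle fibers, this is a genuine origami model, and its symplectic cut space is a neighborhood of $B_1\sqcup B_2$ in $M$.

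Then I would globalize by gluing. Deleting from $M$ the open sub-neighborhoods $\{r<\varepsilon/2\}$ around $B_1$ and $B_2$ leaves a manifold with boundary $\SS(N_1)\sqcup\SS(N_2)$, which $\gamma$ identifies with two copies of $\SS(N_1)$; I attach the local origami model $\SS(N_1)\times(-\varepsilon,\varepsilon)$, matching its $r>\varepsilon/2$ end to the $B_1$-side boundary by the identity and its $r<-\varepsilon/2$ end to the $B_2$-side boundary via $\gamma$. The two-forms agree on the overlaps, so they patch to a closed two-form on $\widetilde M$ that is symplectic off $Z$ and folded along $Z$ with fibrating null foliation; hence $\widetilde M$ is an origami manifold, and collapsing the null circles on the two sides of $Z$ recovers $(M,B,\gamma)$. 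For uniqueness: if $\widetilde M$ and $\widetilde M'$ both cut to $(M,B,\gamma)$, then off their folds each is canonically identified with $M\setminus(B_1\cup B_2)$, while a neighborhood of each fold is, by the folded Darboux theorem applied with parameters along $B$, determined up to origami-symplectomorphism by the null fibration $Z\to B$, which is itself read off from $(M,B,\gamma)$; a Moser-type argument on the overlapping collar reconciles the two identifications into a global origami-symplectomorphism.

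I expect the main obstacle to be the local analysis of the second step: pushing $\omega_N$ into a normal form near the zero section that \emph{literally} unfolds, i.e.\ running the relative Moser argument on the disk bundle while controlling the $r\to 0$ behavior closely enough that the resulting degeneracy is exactly of folded type and the null foliation fibers over $B$. The analogous uniformity near $Z$ is also precisely what makes the uniqueness patching go through. By contrast, the bookkeeping forced by $\gamma$ interchanging $B_1$ and $B_2$ in the co\"orientable case is routine once the one-sided model is in hand.
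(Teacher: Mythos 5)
This proposition is not proved in the paper at all: it is imported verbatim from Cannas da Silva--Guillemin--Pires \cite[Props.\ 2.5 \& 2.7]{CGP:origami}, and your construction is essentially theirs --- $\widetilde{M}$ is the radial blow-up, built by gluing $M\setminus B$ to the local model $\SS(N_1)\times(-\varepsilon,\varepsilon)$ with the $t>0$ end attached near $B_1$ and the $t<0$ end attached near $B_2$ via $\gamma$. One simplification worth making: the ``main obstacle'' you flag (forcing $\omega$ into a normal form even in $r$ so that it extends across $r=0$) is unnecessary, because you can instead define the folded form as $\beta^*\omega$ for the globally smooth map $\beta(x,t)=\iota(t\cdot x)$ from $\SS(N_1)\times(-\varepsilon,\varepsilon)$ into the tubular neighborhood; this is automatically a smooth closed $2$-form, and the folded condition, together with the identification of the null foliation with the fibers of $\SS(N_1)\to B_1$, follows from the pointwise leading-order expansion $\beta^*\omega=\pi^*(\omega|_{B_1})+t\,dt\wedge\alpha+O(t^2)$ rather than from any global normal form. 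Your uniqueness sketch (canonical identification off the folds plus a Moser argument in a collar of the fold) is likewise the argument in the cited source.
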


This newly-created fold $Z\subset\widetilde{M}$ involves the radial projectivized normal bundle of $B\subset M$, so we call the origami manifold $\widetilde{M}$ the \textbf{radial blow-up} of  $M$ through $(\gamma,B)$. 
The cutting operation and the radial
blow-up operation  are in the following sense inverse to each other.

\begin{proposition}[\!\! {\cite[Prop.\ 2.37]{CGP:origami}}]
Let $M$ be an origami manifold with cut space $M_0$.  The radial blow-up
$\widetilde{M_0}$ is origami-symplectomorphic to $M$.
\end{proposition}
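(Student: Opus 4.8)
The plan is to show that radial blow-up and cutting are mutually inverse by tracking the construction through the local model. Since both operations are local near $Z$ (respectively near $B_1\cup B_2$) and are the identity away from these loci, it suffices to verify the claim on a tubular neighborhood, then glue. First I would recall the local model from \cite{CGP:origami}: near the fold $Z$ of an origami manifold $M$, the Folded Darboux Theorem identifies a neighborhood of $Z$ with a neighborhood of $Z\times\{0\}$ in $Z\times(-\varepsilon,\varepsilon)$, equipped with a standard folded form, where the null fibration $Z\to B$ is built in. Cutting replaces this model with (a neighborhood of $B$ in) the associated symplectic disc bundle over $B$, together with the natural involution $\gamma$ exchanging the two half-discs (in the co\"orientable case, exchanging neighborhoods of $B_1$ and $B_2$). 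This identifies the data $(\gamma, B)$ attached to $M_0$ with precisely the data produced by cutting $M$.

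The key steps, in order, are: (1) reduce to a neighborhood of the fold, using that $M\setminus Z \cong M_0\setminus(B_1\cup B_2)$ canonically as symplectic manifolds (this is how $M_0$ is defined); (2) in the local model, compute the radial blow-up of the symplectic disc bundle $(\mathcal U, B, \gamma)$ directly — the radial projectivized normal bundle of $B$ appears as the new fold, and one checks that the resulting origami form agrees with the standard folded normal form near $Z$; (3) observe that the null fibration of $\widetilde{M_0}$, restricted to this new fold, is by construction the circle bundle $S(\text{normal bundle of }B)\to B$, which must be shown to coincide (as a fibration, respecting orientations) with the original null fibration $\pi: Z\to B$ of $M$; (4) check that the origami-symplectomorphism built in the local model extends by the identity over $M\setminus Z$ to a global origami-symplectomorphism $\widetilde{M_0}\xrightarrow{\sim} M$. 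Throughout, one invokes Proposition~\ref{prop:model} for the uniqueness statement: since $\widetilde{M_0}$ is \emph{an} origami manifold with symplectic cut space $M_0$, and $M$ is another, uniqueness up to origami-symplectomorphism gives the result once we know the cut space of $M$ is $M_0$ with the same involution data.

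In fact the cleanest route is to lean entirely on the uniqueness half of Proposition~\ref{prop:model}: by definition $M_0$ is the symplectic cut space of $M$, so $M$ is an origami manifold whose cut space is $M_0$; by construction $\widetilde{M_0}$ is an origami manifold whose cut space is $M_0$; hence by the uniqueness clause they are origami-symplectomorphic. The only genuine content to verify is that the involution $\gamma$ recovered from $M$ (gluing data of the two copies of $Z$) is the \emph{same} involution used to form $\widetilde{M_0}$ — i.e.\ that cutting then blowing up does not alter the identification $\gamma$ — which is where one must actually unwind the local normal forms rather than cite a black box.

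The main obstacle I expect is step (3): matching the two null fibrations over $B$, together with their orientations. The radial blow-up manufactures its fold as the sphere bundle of the normal bundle of $B\subset M_0$, whereas the original fold $Z$ of $M$ came with a \emph{given} fibering null foliation; showing these agree requires checking that the symplectic-cut construction assigns to $Z$ exactly the normal-bundle data that the blow-up reads back off, and that the orientations of the circle fibers (part of the origami data, by the definition of origami manifold) are preserved under the round trip. This is essentially a bookkeeping computation in the local model of \cite{CGP:origami}, but it is the step where a sign or an orientation could plausibly go wrong, so it deserves care.
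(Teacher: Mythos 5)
The paper you are reading gives no proof of this statement: it is imported wholesale from \cite[Prop.~2.37]{CGP:origami}, so there is no internal argument to compare yours against. Measured against the proof in that reference, your longer route --- steps (1)--(4): reduce to a tubular neighborhood of the fold via the local (Moser-type) model, compute the radial blow-up of the disc-bundle neighborhood of $B$ in $M_0$, match the resulting fold and its null fibration with the original $\pi:Z\to B$ including fiber orientations, and glue the local origami-symplectomorphism to the identity on $M\setminus Z\cong M_0\setminus(B_1\cup B_2)$ --- is a faithful outline of how the cited proof actually proceeds, and your flag on step (3) correctly identifies where the real work and the orientation bookkeeping sit.

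The one substantive caveat concerns your ``cleanest route.'' The uniqueness clause of Proposition~\ref{prop:model} (i.e.\ \cite[Props.~2.5 \& 2.7]{CGP:origami}) asserts that the radial blow-up \emph{construction} is well defined: different auxiliary choices (tubular neighborhood, model for the projectivized normal bundle, etc.) yield origami-symplectomorphic results. It does \emph{not} assert that any two origami manifolds having the same cut space and the same involution data are origami-symplectomorphic; that stronger classification statement is essentially Prop.~2.37 itself. So leaning ``entirely'' on uniqueness is circular, and the local-model computation of your steps (1)--(4) is not merely a verification that the involutions match --- it \emph{is} the proof. With that understood, your proposal is sound.
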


There exist Hamiltonian versions of these two operations which may be used to see that the moment map $\Phi$ for an origami manifold $M$ coincides, on each connected component of $M \setminus Z$ with the induced moment map $\Phi_i$  on the corresponding symplectic cut piece $M_i$. As a result, the moment image $\Phi(M)$ is the union of convex polytopes $\Delta_i$.

Furthermore, 
if the circle fibers of the null fibration for a connected component  $\mathcal{Z}$ of the fold $Z$ are orbits for a circle subgroup $\SS^1\subset \T$,
then  
$\Phi(\mathcal{Z})$ is a facet of each of the two polytopes corresponding to neighboring components of $M\setminus Z$.
Let us denote these  two polytopes $\Delta_1$ and $\Delta_2$.  We note that they must \textbf{agree} near $\Phi(\mathcal{Z})$: there is a neighborhood $\mathcal{V}$ of $\Phi(\mathcal{Z})$ in $\R^n$ such that $\Delta_1\cap\mathcal{V}=\Delta_2\cap\mathcal{V}$. The condition that the circle fibers are orbits 
is automatically satisfied when the action is toric, and in that case there is a classification theorem in terms of the moment data.

The moment data of a toric origami manifold can be encoded in the form of an origami template{, originally defined in~\cite[Def.\ 3.12]{CGP:origami}. Definition~\ref{def:template} below is a refinement of that original definition.  The reasons for this refinement are explained in Remark~\ref{rmk:reasons}. 

Following~\cite[p.\ 5]{book:graph}, a \textbf{graph} $G$ consists of a nonempty set $V$ of \textbf{vertices} and a set $E$ of \textbf{edges} together with an incidence relation  that associates an edge with its two \textbf{end vertices}, which need not be distinct. Note that this allows for the existence of (distinguishable) multiple edges with the same two end vertices, and of \textbf{loops} whose two end vertices are equal.
We introduce some additional notation: let $\mathcal{D}_n$ be the set of all Delzant polytopes in $\R^n$ and $\mathcal{E}_n$ the set of all subsets of $\R^n$ which are facets of elements of $\mathcal{D}_n$.  

\begin{definition}\label{def:template}
An $n$-dimensional \textbf{origami template} consists of a graph $G$, called the \textbf{template graph}, and a pair of maps $\Psi_V: V\to\mathcal{D}_n$ and $\Psi_E:E\to\mathcal{E}_n$ such that:
\begin{enumerate}
\item if $e$ is an edge of $G$ with end vertices $u$ and $v$, then $\Psi_E(e)$ is a facet of each of the polytopes $\Psi_V(u)$ and  $\Psi_V(v)$, and these polytopes agree near $\Psi_E(e)$; and

\item if $v$ is an end vertex of each of the two distinct edges $e$ and $f$, then $\Psi_E(e)\cap\Psi_E(f)=\emptyset$.
\end{enumerate}
\end{definition}

The polytopes in the image of the map $\Psi_V$ are the Delzant polytopes of 
the symplectic cut pieces. For each edge $e$, the set $\Psi_E(e)$ is a facet 
of the polytope(s) corresponding to the end vertices of $e$. We refer to such 
a set as a {\bf fold facet}, as it is the image of the connected components of 
the folding hypersurface\footnote{ \, A nonco\"orientable  connected component 
of the folding hypersurface corresponds to a loop edge $e$.}.

In the example of Figure~\ref{fig:S4}, the template graph $G$ has two vertices and one edge joining them. Both vertices are mapped  to the same isosceles right angle triangle under $\Psi_V$, and the edge is mapped to the hypotenuse of that triangle under $\Psi_E$.

\begin{remark}\label{rmk:reasons}
In the original definition of origami template, Definition 3.12 in~\cite{CGP:origami}, a template consisted of a pair $(\mathcal{P},\mathcal{F})$. The set $\mathcal{P}$ was a collection of Delzant polytopes and $\mathcal{F}$ was a collection of pairs or singletons of facets of polytopes in $\mathcal{P}$, satisfying certain conditions. Roughly speaking, $\mathcal{P}$ is the image of $\Psi_V$ and the sets in $\mathcal{F}$ assigned identifications of facets of polytopes in $\mathcal{P}$ in a way similar to that of the map $\Psi_E$. To understand the problem with this old definition we turn again to the example of Figure~\ref{fig:S4}: the collection $\mathcal{P}$ would contain two identical triangles, and $\mathcal{F}$ would contain one pair, consisting of the hypotenuses of each of the triangles. However, $\mathcal{P}$ is a set, and therefore if it consists of two identical elements it actually consists of only one such element. The same issue exists with the pairs in $\mathcal{F}$ and in other examples, with $\mathcal{F}$ itself. Simply replacing the word set by the word multiset to allow for multiple instances of the same element gives rise to a different type of problem.

\vskip 0.1in

\noindent We thank an anonymous referee for bringing this problem to our attention.
\end{remark}
}

\noindent With these combinatorial data in place, we may now state the classification theorem.

\begin{theorem}[\!\! {\cite[Theorem 3.13]{CGP:origami}}]\label{thm:origamiDelzant}
There is a one-to-one correspondence
$$
\left\{\begin{array}{c}
\mbox{compact toric}\\
\mbox{origami manifolds}
\end{array}\right\}
\leftrightsquigarrow
\left\{\begin{array}{c}
\mbox{origami templates}
\end{array}\right\} ,
$$
up to equivariant origami-symplectomorphism on the left-hand side,
and affine equivalence of the image of the template in $\mathbb{R}^n$ on the right-hand side.
\end{theorem}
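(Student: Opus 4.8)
The plan is to set up the correspondence in both directions, reducing everything to Delzant's theorem applied to the symplectic cut pieces, together with the cutting and radial blow-up operations recalled above. \emph{From a toric origami manifold to a template:} let $(M,\omega,\Phi)$ be a compact connected toric origami manifold with fold $Z$. Cutting along $Z$ produces the symplectic cut space $M_0$, a compact symplectic manifold each of whose connected components $M_v$ inherits an effective toric Hamiltonian $\T$-action; by Delzant's theorem $M_v$ is classified up to equivariant symplectomorphism by the Delzant polytope $\Delta_v:=\Phi_v(M_v)\subset\R^n$. Form the template graph $G$ with one vertex per component of $M_0$ and one edge per connected component $\mathcal{Z}$ of $Z$, the edge for $\mathcal{Z}$ being incident to the vertices of the one or two cut pieces adjacent to $\mathcal{Z}$ (a nonco\"orientable component of $Z$ yields a loop, using the additional involution data of the footnote to Proposition~\ref{prop:model}). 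Set $\Psi_V(v)=\Delta_v$ and $\Psi_E(\mathcal{Z})=\Phi(\mathcal{Z})$. As recalled just before Definition~\ref{def:template}, $\Phi(\mathcal{Z})$ is a facet of each adjacent polytope and the adjacent polytopes agree near it, which is condition (1); condition (2) follows because distinct components of $Z$ are disjoint closed submanifolds of $M$ with disjoint saturated tubular neighborhoods, together with the local normal form showing that at most one component of $Z$ lies over a given facet of a cut piece.

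\emph{From a template to a toric origami manifold:} given $(G,\Psi_V,\Psi_E)$, apply Delzant's theorem to each $\Psi_V(v)$ to obtain a symplectic toric manifold $M_v$, and put $M_0=\bigsqcup_v M_v$. For an edge $e$ with end vertices $u$ and $v$, the fold facet $\Psi_E(e)$ is a facet of both $\Psi_V(u)$ and $\Psi_V(v)$; let $B_e^u\subset M_u$ and $B_e^v\subset M_v$ be the corresponding codimension-two symplectic toric submanifolds, i.e.\ the moment-map preimages of $\Psi_E(e)$. Since $\Psi_V(u)$ and $\Psi_V(v)$ agree near $\Psi_E(e)$, their symplectic normal data along these submanifolds match, so the equivariant symplectic neighborhood theorem gives an equivariant symplectomorphism of tubular neighborhoods carrying $B_e^u$ to $B_e^v$; collecting these maps over all edges — condition (2) of Definition~\ref{def:template} guarantees that the relevant neighborhoods inside a fixed $M_v$ are disjoint, so the assembly is unambiguous — yields a symplectic involution $\gamma$ of a tubular neighborhood of $B=\bigsqcup_e(B_e^u\sqcup B_e^v)$ of the type required in Proposition~\ref{prop:model}. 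That proposition produces an origami manifold $\widetilde{M_0}$ whose symplectic cut space is $M_0$; it carries the evident $\T$-action making it toric, and is connected exactly when $G$ is.

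\emph{The two constructions are mutually inverse, up to the stated equivalences.} Starting from $M$, cutting and then radially blowing up returns a manifold origami-symplectomorphic to $M$ by \cite[Prop.\ 2.37]{CGP:origami}; starting from a template, the cut space of the radial blow-up recovers the pieces $M_v$ and the gluing data, hence the template, and uniqueness in Proposition~\ref{prop:model} makes this assignment well defined up to origami-symplectomorphism. That affine-equivalent templates yield equivariantly origami-symplectomorphic manifolds, and conversely, reduces piece by piece to the corresponding clause of Delzant's theorem for the cut pieces, once one notes that the involution $\gamma$ is canonically pinned down near each fold facet by the (agreeing) polytopal data and so carries no extra moduli.

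\emph{The main obstacle} is the equivariant rigidity of the germ of a toric origami manifold along its fold: one must show both that this germ is determined by the pair of agreeing Delzant polytopes near the corresponding fold facet (needed for well-definedness of the template-to-manifold map and for injectivity of the correspondence) and that every toric origami manifold is locally of this model form near $Z$ — an equivariant folded Darboux theorem, which is precisely where the hypothesis that the null-fibration circles are $\T$-orbits is used. Disjointness condition (2) of Definition~\ref{def:template} on the manifold side is a byproduct of the same local analysis. Once this normal form is in hand, the global statement follows formally from Delzant's theorem and Proposition~\ref{prop:model}.
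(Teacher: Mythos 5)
This theorem is not proved in the paper at all: it is quoted verbatim from \cite{CGP:origami} (Theorem 3.13 there), so there is no in-paper argument to compare against. Your outline is a faithful reconstruction of the strategy of the cited proof --- cut along $Z$ and apply Delzant's theorem to each piece to get the template, use the radial blow-up of Proposition~\ref{prop:model} to invert the cutting, and observe that the real content is the equivariant rigidity of the germ along the fold in the toric case, which you correctly single out as the crux. The only step I would press you on is condition (2) of Definition~\ref{def:template}: two fold facets of the same polytope are a priori only two facets of a Delzant polytope and could meet along a lower-dimensional face; the clean way to exclude this is that the cut piece is a genuine toric symplectic manifold, so each point of its polytope lies under a single orbit, whose preimage in the manifold-with-boundary $M_v$ would otherwise have to lie in two disjoint components of $Z$ at once. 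With that point tightened, your sketch matches the argument of \cite{CGP:origami}.
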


The orbit space $M/\T $ of a toric origami manifold is closely related to the origami template. When $M$ is a toric symplectic manifold, 
then the orbit space may be identified with the corresponding Delzant polytope; this identification is achieved by the moment map.
For a toric origami manifold, the orbit space is realized as the topological space obtained by gluing the polytopes in $\Psi_V(V)$ 
along the fold facets as specified by the map $\Psi_E$. More precisely, the orbit space is the quotient
\begin{equation}\label{eq:orbitspace}
M/\T = \bigsqcup_{v\in V} (v,\Psi_V(v)) \Big/ \thicksim \ ,
\end{equation}
 where we identify $(u,x)\thicksim(v,y)$ if there exists an edge $e$ with endpoints $u$ and $v$ and the points $x=y\in\Psi_E(e)\subset \R^n$.
 Again, this identification is achieved by the moment map.
In simple low-dimensional examples, we can visualize the orbit space by superimposing the polytopes $\Psi_V(v)$ in 
$\mathbb{R}^n$ and indicating which of their facets to identify; see for instance Figures~\ref{fig:S4}, \ref{fig:non-acyclic}, 
\ref{fig:torictorus} and ~\ref{fig:folded-hirz}.
We will see in Section~\ref{sec:std} that there is a deformation retraction from orbit space $M/\T$ to the template graph.

There is a natural description of the faces of $M/\T$. The  facets of a polytope are well-understood. The set of facets of $M/\T$ is
$$
\bigsqcup_{\mathclap{\substack{v\in V \\ F \text{ facet of } \Psi_V(v)\\ F \text{ not a fold facet}}}} \,(v,F) \Big/ \thicksim \ ,
$$

\vskip 0.05in

\noindent where the equivalence relation is induced by the one in (\ref{eq:orbitspace}). The faces of $M/\T$ are non-empty intersections of facets in $M/\T$, together with $M/\T$ itself.  This notion of face of the orbit space agrees with Masuda and Panov's definition mentioned 
in Section~\ref{sec:std}.

\section{Cohomology concentrated in even degrees}
\label{se:even}

We say that the origami template is {\bf acyclic} if the template graph is acyclic, and therefore a tree. In this case, the {\bf leaves} of the origami template are the polytopes which are images under $\Psi_V$ of the leaves of the template graph.

In light of Remark~\ref{rmk:orientable}, a toric origami manifold with co\"orientable folding hypersurface is orientable exactly when the template graph has no odd cycles. In particular, if $M$ has an acyclic origami template, then $M$ is automatically orientable.
Two non-acyclic origami templates are shown in Figure~\ref{fig:non-acyclic}, one corresponding to an orientable origami manifold and the other to a non-orientable one.

\begin{figure}[h]
\begin{center}
\includegraphics[height=1.6in]{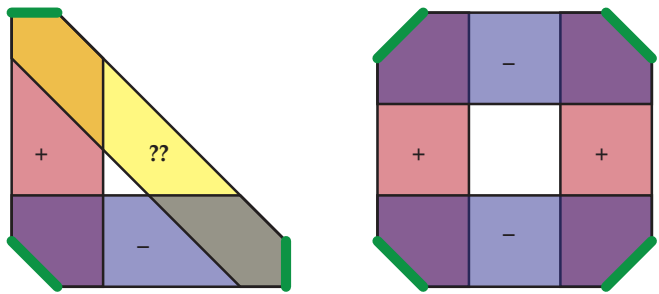}
\caption{The orbit spaces corresponding to two non-acyclic templates of origami manifolds with co\"orientable fold. Such manifolds are orientable exactly when there exists a consistent choice of signs for the polytopes such that the sign changes whenever we traverse a fold facet. The one on the left corresponds to a non-orientable manifold and the one on the right to an orientable manifold.} \label{fig:non-acyclic}
\end{center}
\end{figure}

The proof of the main theorem in this section will involve induction on the number of vertices of the  
template graph.  To prove the inductive hypothesis, we need some auxiliary spaces.  We focus on a connected component
$\mathcal{Z}$ of the fold $Z$ such that $M\smallsetminus\mathcal{Z}$ is the union of one open symplectic manifold
$W_-$ and one open origami manifold $W_+$.  The corresponding closed manifolds with boundary are
$M_-=W_-\cup \mathcal{Z}$ and $M_+=W_+\cup \mathcal{Z}$. Combinatorially,
$M_-$ corresponds to a leaf of the origami template for $M$.

Collapsing the fibers of the null-foliation on $\mathcal{Z}$ results in a toric
symplectic manifold $\mathcal{B}=\mathcal{Z}/\SS^1$ of dimension $\dim(\mathcal{B})=\dim(M)-2$.
Cutting $M$ along $\mathcal{Z}$ yields one toric symplectic manifold $C_-$ and one toric origami
manifold $C_+$ with one fewer connected component of the fold $Z\smallsetminus\mathcal{Z}$.
Finally, we use the space $C=C_+\cup_{\mathcal{B}} C_-$, which is not a manifold.
This notation is illustrated in the Figure~\ref{fig:notation} and summarized in Table~\ref{table:notation}.

\begin{center}
\begin{figure}[h]
\includegraphics[width=0.75in]{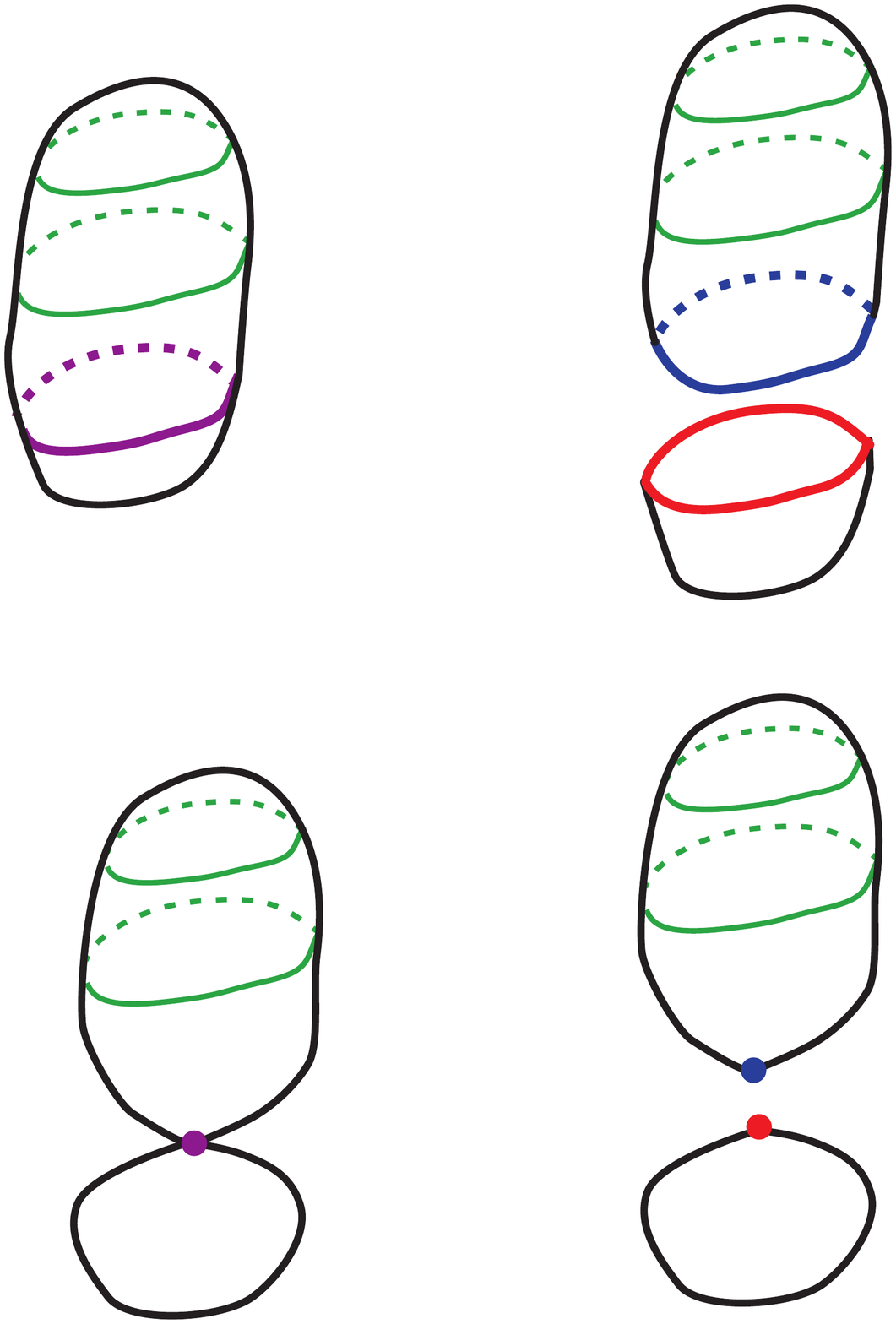} \hskip 0.4in
\includegraphics[width=0.75in]{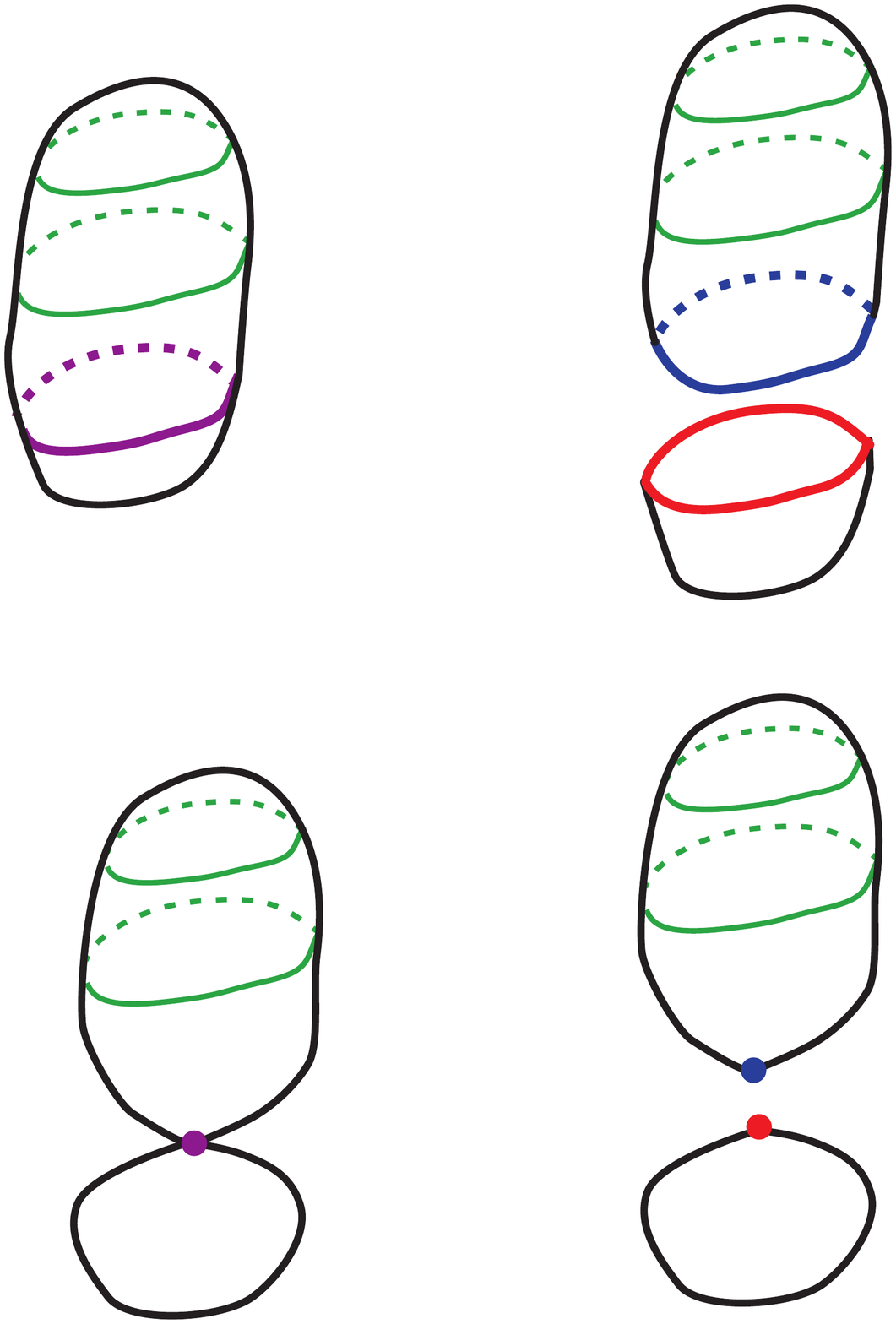} \hskip 0.4in
\includegraphics[width=0.75in]{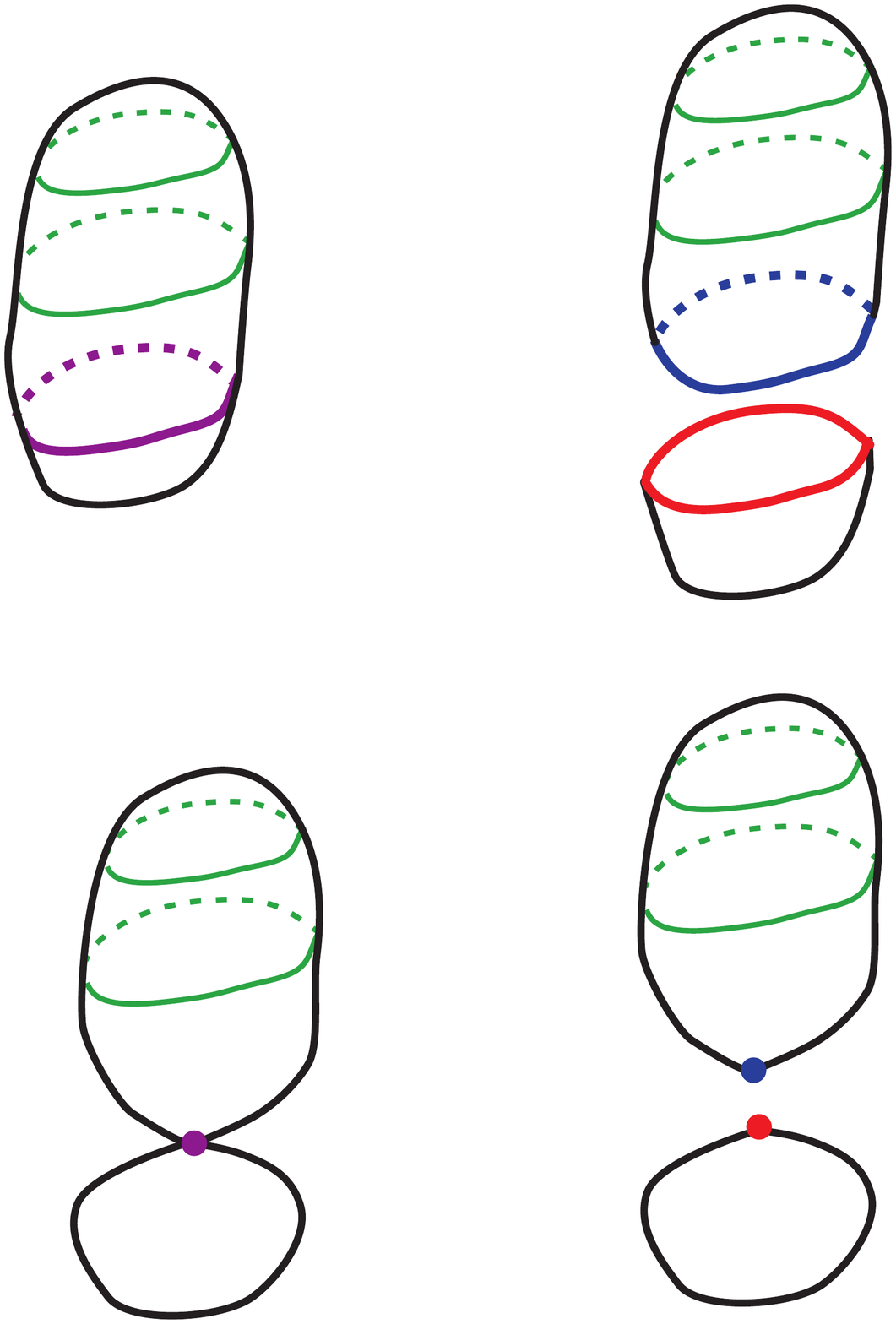} \hskip 0.4in
\includegraphics[width=0.75in]{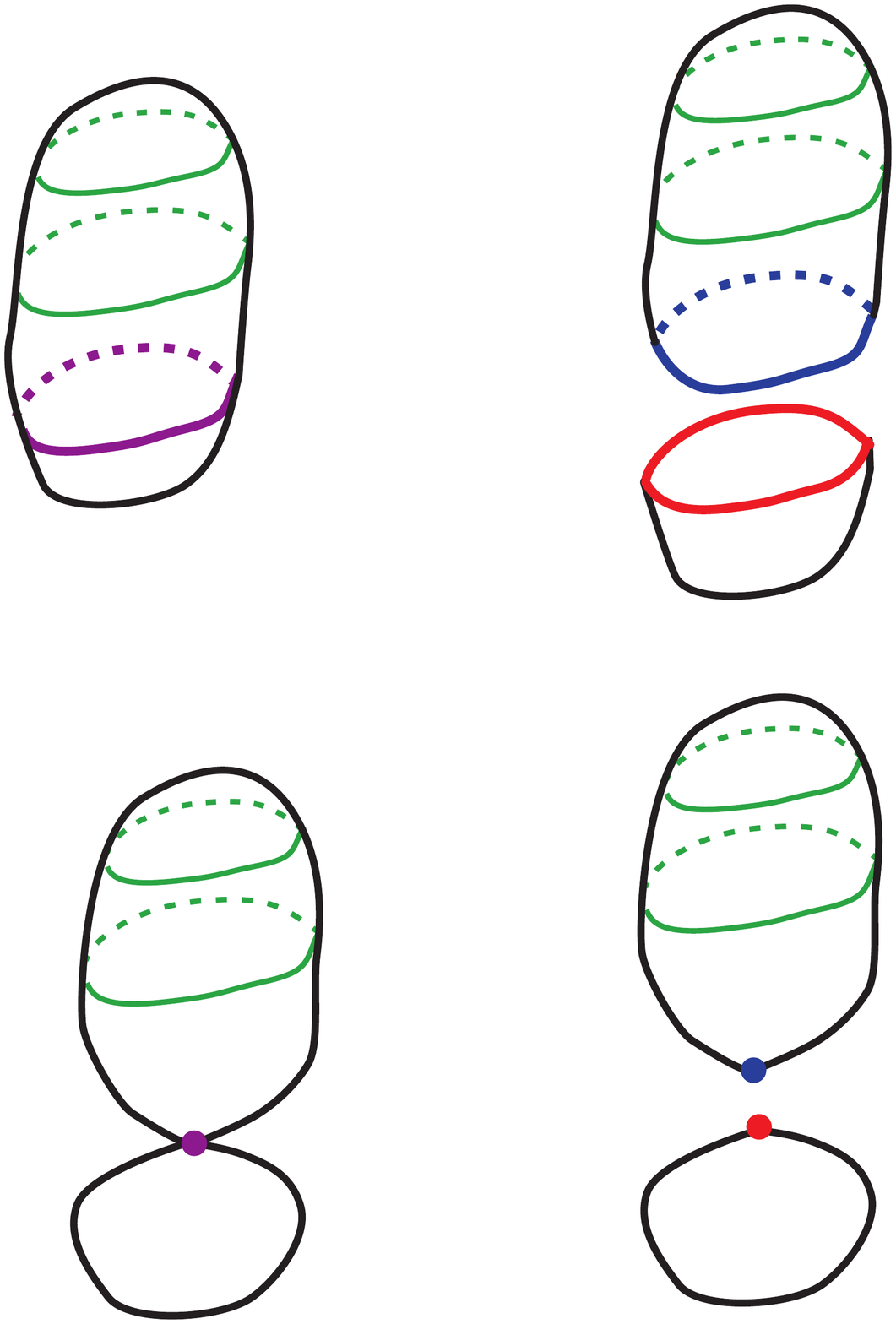} 
\caption{From left to right, the spaces $M$, $M_+\sqcup M_-$, $C_+\sqcup C_-$ and
$C$.}\label{fig:notation}
\end{figure}
\end{center}

\renewcommand{\arraystretch}{1.3}

\begin{center}
\begin{table}[h]\caption{Summary of notation}
\begin{tabular}{c|l}
\hline
Notation & Description  \\ \hline
$M$ & Toric origami manifold, $\T\acts M$ \\ \hline
$\mathcal{Z}\subset Z$ & Connected component $\mathcal{Z}$ of the fold $Z$ \\ \hline
$\mathcal{B}\subset B$ & Toric symplectic manifold $\mathcal{B}=\mathcal{Z}/\SS^1$ and union of such $B=Z/\SS^1$ \\ \hline
$W_+$ & Connected component of $M\setminus \mathcal{Z}$ that is an open origami manifold  \\ \hline
$M_+$ & $W_+\cup \mathcal{Z}$, an origami manifold with boundary  \\ \hline
$C_+$ & $W_+\cup \mathcal{B}$, an origami manifold with one fewer vertex in its template graph \\ \hline
$W_-$ & Connected component of $M\setminus \mathcal{Z}$ that is an open symplectic manifold  \\ \hline
$M_-$ & $W_-\cup \mathcal{Z}$, a symplectic manifold with boundary  \\ \hline
$C_-$ & $W_-\cup \mathcal{B}$, a toric symplectic manifold   \\ \hline
$C$ & $W_+ \cup \mathcal{B}\cup W_-=C_+\cup_{\mathcal{B}} C_-$ (a $\T$-space, but not a manifold) \\ \hline
\end{tabular}\label{table:notation}
\end{table}
\end{center}

\renewcommand{\arraystretch}{1.1}

\begin{lemma}\label{le:magic}
Suppose that $M$ is a compact symplectic toric manifold with moment polytope $\Delta_M$.  
Let $\mathcal{B}$ be a codimension $k$ 
$\T$-invariant symplectic submanifold whose moment map image $\Delta_\mathcal{B}$ is a 
$k$-dimensional face of $\Delta_M$.
Then the inclusion $i:\mathcal{B}\into M$ induces a surjection
$$
i^*:H^*(M;\Z)\onto H^*(\mathcal{B};\Z).
$$
\end{lemma}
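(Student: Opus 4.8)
The plan is to use the standard structure theory of symplectic toric manifolds, namely that $M$ admits a $\T$-invariant Morse--Bott function (a component of the moment map, or a generic linear combination) whose critical submanifolds are themselves toric, and whose negative normal bundles are even-dimensional, so that $H^*(M;\Z)$ is free with a basis indexed by the vertices of $\Delta_M$ (equivalently, $M$ has a $\T$-invariant CW or cellular structure with only even cells). I would first treat the case $k=n$, i.e. $\mathcal B$ is a $\T$-fixed point corresponding to a vertex $v$ of $\Delta_M$: then $H^*(\mathcal B;\Z)=\Z$ in degree $0$ and surjectivity of $i^*$ in degree $0$ is automatic. The general statement should be reduced to this, or proved directly, by an induction on $k = \dim\mathcal B$ or on $\dim M - \dim\mathcal B$.

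The cleanest approach I would take is via equivariant cohomology and localization together with the combinatorics of faces. Because $M$ is equivariantly formal (cohomology in even degrees, free over $\Z$), $H^*_\T(M;\Z)\to H^*(M;\Z)$ is surjective with kernel generated by $H^{>0}(B\T)$, and similarly for $\mathcal B$; so it suffices to show $H^*_\T(M;\Z)\onto H^*_\T(\mathcal B;\Z)$. Now $\mathcal B$, being a $\T$-invariant symplectic submanifold whose moment image $\Delta_{\mathcal B}$ is a face of $\Delta_M$, is exactly the toric submanifold associated to that face: its fixed points are a subset of $M^\T$ (those vertices of $\Delta_M$ lying on $\Delta_{\mathcal B}$), and the GKM graph of $\mathcal B$ is the full subgraph of the GKM graph of $M$ on those vertices, with the same edge-labels. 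Under the GKM/Chang--Skjelbred description, $H^*_\T(M;\Z)$ is the ring of maps from $M^\T$ to $\Z[x_1,\dots,x_n]$ satisfying the edge congruences, and $i^*$ is restriction to $\mathcal B^\T \subset M^\T$. So surjectivity amounts to the purely combinatorial statement: any "GKM function" on the subgraph $\mathcal B^\T$ extends to a GKM function on all of $M^\T$. This in turn follows from the existence of a Morse order (generic linear functional) on $\Delta_M$ compatible with the face $\Delta_{\mathcal B}$, using the standard fact that for each vertex the "flow-up class" exists over $\Z$ because the manifold is equivariantly formal over $\Z$ — one builds the extension one vertex at a time in decreasing Morse order, correcting by multiples of these flow-up classes, and the correction is an \emph{integer} multiple precisely because of smoothness (Delzant) of the polytope.

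Alternatively, and perhaps more in keeping with the geometric flavour of the paper, I would argue geometrically: choose the linear functional defining the Morse--Bott function $f$ on $M$ to be generic \emph{among those whose restriction to the face $\Delta_{\mathcal B}$ is still generic}. Then $f|_{\mathcal B}$ is a Morse--Bott function on $\mathcal B$, its critical points are a subset of those of $f$, and the stable/unstable manifolds in $\mathcal B$ are the intersections with $\mathcal B$ of the corresponding ones in $M$ (this uses that $\mathcal B$ is $\T$-invariant and that near each fixed point the toric local model respects the face). Hence each cohomology generator of $\mathcal B$ (the class dual to a stable manifold, or the equivariant flow-up class of a fixed point of $f|_{\mathcal B}$) is the restriction of the corresponding generator of $M$, giving surjectivity of $i^*$ on the nose over $\Z$.

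The main obstacle is the \emph{integrality} of the extension/lifting step: it is easy to see surjectivity with rational coefficients (by localization the restriction map is injective on each side and the combinatorics of faces gives a rational splitting), but to get it over $\Z$ one must know that the flow-up classes, or equivalently the canonical generators attached to vertices, can be chosen in $H^*(M;\Z)$ — this is the place where Delzant smoothness of $\Delta_M$ is essential and where the argument genuinely uses that $M$ is a smooth toric manifold rather than an orbifold. I would handle this by invoking the known fact that a smooth compact symplectic toric manifold has integral equivariant cohomology freely generated by flow-up classes (Kirwan/Tolman--Weitsman-type results, or the Białynicki-Birula decomposition), and checking that restriction to $\mathcal B$ sends the flow-up class of a vertex $v\in\Delta_{\mathcal B}$ to the flow-up class of $v$ in $\mathcal B$ and the flow-up class of a vertex $v\notin\Delta_{\mathcal B}$ to $0$ (since its unstable manifold meets $\mathcal B$ in lower dimension, or not at all, after the generic choice of $f$).
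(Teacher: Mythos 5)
Your strategy is sound and could be pushed through, but it is a genuinely different --- and much heavier --- route than the paper's. The paper's proof is a two-line, non-equivariant, ring-theoretic argument: by Danilov--Jurkiewicz, $H^*(\mathcal{B};\Z)$ is generated in degree $2$ by the classes dual to the toric divisors, one for each facet $F$ of $\Delta_{\mathcal{B}}$; every such $F$ equals $\widetilde{F}\cap\Delta_{\mathcal{B}}$ for a facet $\widetilde{F}$ of $\Delta_M$, and $i^*$ carries the degree-$2$ generator of $M$ attached to $\widetilde{F}$ to the generator of $\mathcal{B}$ attached to $F$; a ring map whose image contains a set of ring generators is onto. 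Your argument instead works with equivariant modules: it yields the stronger statement $H_\T^*(M;\Z)\onto H_\T^*(\mathcal{B};\Z)$ and would survive in settings (general GKM spaces) where degree-$2$ generation fails, at the cost of invoking integral equivariant formality, the integral Chang--Skjelbred/GKM description, and the existence of integral flow-up classes.

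One step of your geometric version needs repair. Choosing the covector $\xi$ ``generic among those whose restriction to $\Delta_{\mathcal{B}}$ is generic'' is not enough: for such a $\xi$ the Morse index of a vertex $v\in\Delta_{\mathcal{B}}$ computed in $M$ can exceed its index in $\mathcal{B}$ (normal edges at $v$ may point downward), so the flow-up class $\sigma_v$ restricts to a class of the wrong degree to be the flow-up class of $v$ in $\mathcal{B}$, and $\sigma_w|_{\mathcal{B}}$ for $w\notin\Delta_{\mathcal{B}}$ need not vanish. (Try $M=\C\P^1\times\C\P^1$, $\mathcal{B}$ one of the factors, $\xi$ diagonal: the top vertex of $\Delta_{\mathcal{B}}$ has index $4$ in $M$ but $2$ in $\mathcal{B}$.) The fix is to take $\xi=\xi_0+\varepsilon\xi_1$ with $\xi_0$ attaining its minimum on $\Delta_M$ exactly along the face $\Delta_{\mathcal{B}}$, $\xi_1$ generic, and $\varepsilon$ small: then every normal edge at a vertex of $\Delta_{\mathcal{B}}$ points strictly up and all vertices of $\Delta_{\mathcal{B}}$ precede all others in the Morse order, so the flow-up classes of vertices of $\Delta_{\mathcal{B}}$ restrict to flow-up classes of $\mathcal{B}$ (which generate over $H_\T^*(pt;\Z)$ by the usual upper-triangularity) and the remaining ones restrict to zero. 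With that correction your argument is complete, though it is worth weighing against the paper's one-paragraph proof.
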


\begin{remark}\label{rem:to-masuda-panov}
Though it holds in more generality, we will only use this Lemma when the submanifold $\mathcal{B}$
is of codimension $2$.  
Just as \cite[Lemma 2.3]{masuda-panov} allows Masuda and Panov to make inductive arguments,
our Lemma~\ref{le:magic} will be the crucial ingredient when we build the cohomology of $M$ from
its related toric pieces.
\end{remark}

\begin{proof}
The manifold $\mathcal{B}$ is itself a symplectic toric manifold.  Its cohomology is 
generated in degree $2$, with one 
class for each facet $F$ of $\Delta_\mathcal{B}$.  Such a facet $F$ is the intersection 
of a facet $\widetilde{F}$ of 
$\Delta_M$ with $\Delta_\mathcal{B}$.  
Under the restriction map $i^*$, the generator corresponding to $\widetilde{F}$
maps to the generator corresponding to $F$.  Thus, $i^*$ is surjective.
\end{proof}

\begin{theorem}\label{thm:even cohomology}
Let $\T\acts M$ be a compact toric origami with acyclic origami template and co\"orientable folding hypersurface.
Then the cohomology $H^*(M;\Z)$ is concentrated in even degrees.
\end{theorem}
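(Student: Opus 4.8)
The plan is to induct on the number of vertices $N$ of the acyclic template graph (a tree). The base case $N=1$ is a compact symplectic toric manifold, whose cohomology is well-known to be concentrated in even degrees. For the inductive step, pick a leaf of the tree, which corresponds combinatorially to the symplectic cut piece $C_-$ (a compact symplectic toric manifold) glued to the rest of $M$ along a single connected component $\mathcal{Z}$ of the fold, in the notation of Table~\ref{table:notation}. We then want to relate $H^*(M)$ to $H^*(C_+)$ (the toric origami manifold with one fewer vertex, to which we may apply the inductive hypothesis) and $H^*(C_-)$ together with $H^*(\mathcal{B})$, where $\mathcal{B}=\mathcal{Z}/\SS^1$ is a compact symplectic toric manifold of codimension $2$.

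The key geometric input is that $M$ is built from $M_+$ and $M_-$ glued along $\mathcal{Z}=\SS^1\hookrightarrow\partial M_\pm$, and that each $M_\pm$ deformation retracts onto the corresponding cut piece $C_\pm$ (collapsing the null $\SS^1$-fibration over $\mathcal{B}$ turns $M_-$ into $C_-$ and $M_+$ into $C_+$, via the radial blow-down); moreover $\mathcal{Z}$ is an $\SS^1$-bundle over $\mathcal{B}$, so $\mathcal{Z}$ is the unit circle bundle of a complex line bundle $L\to\mathcal{B}$. The first main step is therefore to set up the Mayer--Vietoris sequence for $M = M_+\cup_{\mathcal{Z}} M_-$ (or a slightly thickened open version),
\[
\cdots \to H^{k-1}(\mathcal{Z}) \to H^k(M) \to H^k(C_+)\oplus H^k(C_-) \to H^k(\mathcal{Z}) \to \cdots,
\]
using the retractions to replace $H^*(M_\pm)$ by $H^*(C_\pm)$. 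The second step is to analyze the restriction maps $H^*(C_\pm)\to H^*(\mathcal{Z})$. Each factors through $H^*(\mathcal{B})$: the maps $C_\pm \supset \mathcal{B}\xleftarrow{\pi}\mathcal{Z}$ give $H^*(C_\pm)\xrightarrow{i^*} H^*(\mathcal{B})\xrightarrow{\pi^*} H^*(\mathcal{Z})$. By Lemma~\ref{le:magic}, $i^*: H^*(C_-)\to H^*(\mathcal{B})$ is surjective (here $\mathcal{B}$ is a codimension-$2$ $\T$-invariant symplectic submanifold whose moment image is a facet of $\Delta_{C_-}$); one checks the analogous surjectivity for $C_+$, either by the same facet argument applied to the relevant symplectic cut piece of $C_+$ adjacent to $\mathcal{B}$, or directly. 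The third step is to feed in the Gysin sequence of the circle bundle $\mathcal{Z}\xrightarrow{\pi}\mathcal{B}$:
\[
\cdots \to H^{k-2}(\mathcal{B}) \xrightarrow{\cup e} H^k(\mathcal{B}) \xrightarrow{\pi^*} H^k(\mathcal{Z}) \to H^{k-1}(\mathcal{B}) \xrightarrow{\cup e} H^{k+1}(\mathcal{B}) \to \cdots,
\]
where $e=c_1(L)\in H^2(\mathcal{B})$. Since $H^*(\mathcal{B})$ is concentrated in even degrees, $\pi^*$ is surjective onto $H^{\mathrm{even}}(\mathcal{Z})$ and $H^{\mathrm{odd}}(\mathcal{Z})\cong \mathrm{coker}(\cup e: H^{\mathrm{even}}(\mathcal{B})\to H^{\mathrm{even}}(\mathcal{B}))$ shifted by one — in particular $H^{\mathrm{odd}}(\mathcal{Z})$ can be nonzero, so one must be careful. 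However, combining with the previous step: the composite $H^*(C_-)\to H^*(\mathcal{Z})$ is already surjective because $i^*$ is surjective and $\pi^*$ is surjective in even degrees while every class of $\mathcal{Z}$ in odd degree is hit once we observe — and this is the crux — that the relevant odd classes are precisely accounted for. Concretely: since $i^*$ for $C_-$ is onto $H^*(\mathcal{B})$ and $\mathcal{B}$ has only even cohomology, the image of $H^*(C_-)\to H^*(\mathcal{Z})$ is all of $\pi^*H^*(\mathcal{B}) = H^{\mathrm{even}}(\mathcal{Z})$. For the odd part of $H^*(\mathcal{Z})$ I would instead argue that in the Mayer--Vietoris sequence the connecting map $H^{k-1}(\mathcal{Z})\to H^k(M)$ vanishes: the restriction $H^k(M)\to H^k(C_-)$ composed further to $H^k(\mathcal{B})$, together with surjectivity of $H^*(C_-)\to H^*(\mathcal{Z})$ onto the even part and a dimension/degree bookkeeping, forces the sequence to split into short exact sequences
\[
0 \to H^k(M) \to H^k(C_+)\oplus H^k(C_-) \to H^k(\mathcal{Z}) \to 0
\]
in even degrees with all odd terms zero. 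Then $H^{\mathrm{odd}}(M)=0$ follows because $H^{\mathrm{odd}}(C_+)=0$ (induction) and the odd part of $\mathcal{Z}$ is cancelled by the odd part of... — and this is exactly where the argument needs to be run carefully, tracking that $H^{\mathrm{odd}}(\mathcal{Z})$ is absorbed.

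I expect the main obstacle to be precisely this last bookkeeping: $H^{\mathrm{odd}}(\mathcal{Z})$ need not vanish, so the Mayer--Vietoris sequence does not obviously break into even-degree short exact sequences just from knowing the other three spaces have even cohomology. The resolution should come from surjectivity of $H^*(C_-)\to H^*(\mathcal{Z})$ \emph{onto all of $H^*(\mathcal{Z})$}, not just its even part — and to get that I would use that the line bundle $L\to\mathcal{B}$ whose circle bundle is $\mathcal{Z}$ is (the normal bundle direction realized inside $C_-$, so) such that $e=c_1(L)$ is the restriction to $\mathcal{B}$ of the equivariant-type class dual to the facet; then $\cup e: H^{k-2}(\mathcal{B})\to H^k(\mathcal{B})$ is injective for the relevant range (it is the class of a very ample-type divisor, or one uses the hard Lefschetz/Poincaré-duality structure of the toric $\mathcal{B}$), making $H^{\mathrm{odd}}(\mathcal{Z})=0$ after all, or at least making the Gysin boundary map behave. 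Once $H^*(\mathcal{Z})$ is understood and the restriction $H^*(C_-)\to H^*(\mathcal{Z})$ is shown surjective, Mayer--Vietoris immediately gives short exact sequences $0\to H^k(M)\to H^k(C_+)\oplus H^k(C_-)\to H^k(\mathcal{Z})\to 0$, and since all three of $C_+$, $C_-$, $\mathcal{Z}$ have cohomology in even degrees (the first by induction, the latter two as toric symplectic / circle bundles over toric symplectic with the injectivity just noted), $H^*(M)$ is squeezed into even degrees, closing the induction.
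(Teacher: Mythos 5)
Your overall strategy (induct on the number of vertices of the tree, peel off a leaf, use the auxiliary spaces of Table~\ref{table:notation}) matches the paper's, and you have correctly located the crux: the fold component $\mathcal{Z}$ along which you want to run Mayer--Vietoris is a circle bundle over $\mathcal{B}$ and can have nonzero odd cohomology. But both specific mechanisms you propose fail. First, $M_\pm$ does \emph{not} deformation retract onto $C_\pm$: the map $p_\pm\colon M_\pm\to C_\pm$ collapses the circle fibers of $\partial M_\pm=\mathcal{Z}$ onto $\mathcal{B}$, which is a quotient map, not a retraction, and the two spaces are generally not homotopy equivalent. For the origami $\SS^4$ of Figure~\ref{fig:S4}, $M_-$ is a closed $4$-ball (contractible) while $C_-=\C\P^2$. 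So the sequence you write, with $H^k(C_+)\oplus H^k(C_-)$ in the middle, is not the Mayer--Vietoris sequence of $M=M_+\cup_{\mathcal{Z}}M_-$. Second, the hoped-for vanishing $H^{\mathrm{odd}}(\mathcal{Z})=0$ via injectivity of $\cup\, e$ is false: $e=c_1(L)$ is the self-intersection class of the facet-divisor $\mathcal{B}\subset C_-$, which is not ample and can be zero. For instance, if $C_-=\C\P^1\times\C\P^1$ and $\mathcal{B}$ is a ruling, the normal bundle is trivial, $\mathcal{Z}\cong\SS^1\times\SS^2$ and $H^1(\mathcal{Z})=\Z$ --- yet the resulting origami manifold (topologically $\SS^2\times\SS^2$) still has no odd cohomology. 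So the odd classes of $\mathcal{Z}$ are genuinely present and must be absorbed, not shown to vanish.

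The paper sidesteps both problems by never gluing along $\mathcal{Z}$. It forms the (non-manifold) space $C=C_+\cup_{\mathcal{B}}C_-$, glued along $\mathcal{B}$ itself, which is a compact toric symplectic manifold and hence has only even cohomology; Lemma~\ref{le:magic} makes $H^*(C_-)\to H^*(\mathcal{B})$ surjective, so the Mayer--Vietoris sequence for $C$ splits into short exact sequences and $H^*(C;\Z)$ is even by induction. It then shows that $p^*\colon H^*(C;\Z)\to H^*(M;\Z)$ is \emph{surjective} (not an isomorphism): first $p_-^*\colon H^*(C_-)\onto H^*(M_-)$, via Poincar\'e duality for $C_-$ and for the manifold-with-boundary $(M_-,\mathcal{Z})$ together with excision and the splitting of the homology sequence of $(C_-,\mathcal{B})$ coming from Lemma~\ref{le:magic}; then $p^*$ itself, via excision for the pairs $(M,M_-)$, $(C,C_-)$ and the Four Lemma. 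Surjectivity from a space with only even cohomology is all that is needed to conclude. If you insist on a Mayer--Vietoris argument along $\mathcal{Z}$, you would have to first prove $H^{\mathrm{odd}}(M_\pm)=0$ and the surjectivity of $H^{\mathrm{even}}(M_-)\to H^{\mathrm{even}}(\mathcal{Z})$, which reduces to essentially the same surjectivity statements; the detour through $C$ is the cleaner route.
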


\begin{proof}
We proceed by induction on the number $n$ of vertices of the template graph, or equivalently, of connected components of $M\setminus Z$.
The base case is when $n=1$ and $M$ is a compact toric symplectic manifold.  In this case, 
the fact that 
$H^*(M)$ is generated in degree 2, and hence concentrated in even degrees is well-known.  For example, see \cite{danilov, jur:toric}.
The case of a connected folding hypersurface is when $n=2$, and concentration in even degrees is proven in 
\cite[Corollary~5.1]{CGP:origami}.

For the inductive step, we assume that every compact toric origami manifold with co\"orientable folding hypersurface and acyclic origami template with at most $(n-1)$ vertices has cohomology 
concentrated in even degrees.  Let $M$ be a compact toric origami manifold with co\"orientable folding hypersurface and acyclic origami template with $n$ vertices.

Choose a leaf of the origami template, and let $\mathcal{Z}$ be the connected component of the folding hypersurface
that corresponds to the facet separating the leaf from the rest of the origami template.  
We use the notation $M_-$, $M_+$, $C_-$, $C_+$, $C$ and $\mathcal{B}$ as 
listed in Table~\ref{table:notation}.
In particular, we note that $C_-$ is actually a compact toric symplectic manifold and $C_+$ is a compact toric origami manifold 
with co\"orientable folding hypersurface and acyclic origami template with $(n-1)$ vertices.

Let $\mathcal{Z}\stackrel{\pi}{\longrightarrow} \mathcal{B}$ be the quotient by the null-fibration.
Then $\pi$ induces maps 
$$
M\stackrel{p}{\longrightarrow} C \mbox{ and } M_-\stackrel{p_-}{\longrightarrow} C_-.
$$

\noindent We begin by studying the cohomology of $C$.

\begin{Claim}\label{cl:H*C}
The cohomology ring $H^*(C;\Z)$ is concentrated in even degrees.
\end{Claim}

\noindent {\bf Proof of Claim~\ref{cl:H*C}.} 
We may choose $\T$-invariant collar neighborhoods of $C_-$ and $C_+$ in $C$ that deformation retract
to $C_-$ and $C_+$ respectively.  This is analogous to choosing a collar neighborhood of $Z$ in $M$,
as described in the remarks just before Proposition~\ref{prop:model} above.

The intersection of these neighborhoods is a collar neighborhood of
$\mathcal{B}$ and deformation retracts onto $\mathcal{B}$.  The Mayer-Vietoris sequence for these collar neighborhoods
induces a long exact sequence, in
cohomology with integer coefficients
\begin{eqnarray}\label{MayerVietoris}
\xymatrix{
\cdots \ar[r] & H^*(C) \ar[r] & H^*(C_+)\oplus H^*(C_-) \ar[r] & H^*(\mathcal{B}) \ar[r] & \cdots
}.
\end{eqnarray}
As $C_-$ is a compact toric symplectic manifold, Lemma~\ref{le:magic} implies that $H^*(C_-)\rightarrow H^*(B)$ is
a surjection.  Thus the long exact \eqref{MayerVietoris} splits into short exact sequences
(again with integer coefficients)
\begin{eqnarray}\label{ShortMayerVietoris}
\xymatrix{
0 \ar[r] & H^*(C) \ar[r] & H^*(C_+)\oplus H^*(C_-) \ar[r] & H^*(\mathcal{B}) \ar[r] & 0
}.
\end{eqnarray}

Note that the cohomology of $C_-$ and $\mathcal{B}$ is concentrated in even degrees because $C_-$ and $\mathcal{B}$
are compact toric symplectic manifolds.  By the induction hypothesis, the cohomology of $C_+$ is
concentrated in even degrees.  We conclude from \eqref{ShortMayerVietoris} in odd degrees that $H^*(C;\Z)$ must be zero in odd degrees.
\hfill \ding{52}

\medskip

\noindent We now look at the relationship between the cohomology of $C_-$ and that of $M_-$.

\begin{Claim}\label{C- surjects}
The quotient map $p_-:M_-\longrightarrow C_-$ induces a surjection in cohomology
$$
p_-^*: H^*(C_-;\Z)\onto H^*(M_-;\Z).
$$
In particular, $H^*(C_-;\Z)$ is concentrated in even degrees, and so $H^*(M_-;\Z)$ is as well.
\end{Claim}

\noindent {\bf Proof of Claim~\ref{C- surjects}.} 
This is an argument based on \cite[Proof of Proposition~1.3]{hk:coh-cuts},
with corrections following \cite{hausmann:personal} and adjustments for integer
coefficients.
Consider long exact sequence in homology with integer coefficients of the pair $(C_-,\mathcal{B})$
\begin{eqnarray}\label{eq:LES homology}
\xymatrix{
\cdots \ar[r] & H_*(\mathcal{B}) \ar[r]^{i_*}  & H_*(C_-) \ar[r]^{j_*} & H_*(C_-,\mathcal{B}) \ar[r] & \cdots,
}
\end{eqnarray}
where $i:\mathcal{B}\into C_-$ is inclusion and $j:({C_-},\emptyset)\to (C_-,\mathcal{B})$ is  inclusion
of the pair. 
We may apply Poincar\'e duality to Lemma~\ref{le:magic} to establish that 
$i_*$ is an injection in homology with integer coefficients.
Thus the long exact sequence \eqref{eq:LES homology} splits into short exact sequences.  
We then have
a commutative diagram, with integer coefficients,
\begin{eqnarray}\label{eq:comm diag PD}
\begin{array}{c}
\xymatrix{
& H^{*-2}(\mathcal{B}) \ar[dd]_{\cong}^{\mbox{\ding{172}}} \ar[r]^{i_!} & H^*(C_-) \ar[dd]_{\cong}^{\mbox{\ding{173}}} \ar[r]^{p_-^*} & H^*(M_-) \ar[d]_{\cong}^{\mbox{\ding{174}}} \\
& & & H_{d-*}(M_-,\mathcal{Z}) \ar[d]_{\cong}^{\mbox{\ding{175}}}\\
0 \ar[r] & H_{d-*}(\mathcal{B}) \ar[r]^{i_*}  & H_{d-*}(C_-) \ar[r]^{j_*} & H_{d-*}(C_-,\mathcal{B}) \ar[r]  & 0.
}\end{array}
\end{eqnarray}
In this diagram, the manifold $C_-$ has dimension $d$, and $\mathcal{B}$ has dimension $d-2$.
The maps \ding{172} and  \ding{173} are Poincar\'e duality for the manifolds $\mathcal{B}$ and $C_-$ respectively, and
 \ding{174} is Poincar\'e duality for the manifold $M_-$ with boundary $\mathcal{Z}$.
Finally, the map \ding{175} is $(p_-)_*$ and is an isomorphism by excision.
The left square commutes because it is the definition of the push-forward map $i_!$.

We now check that the right square commutes.  We use the fact that the Poincar\'e duality isomorphism
is the cap product with the fundamental class.  So we need to show that for any $a\in H^*(C_-)$,
$$
(p_-)_*\big( p^*_-(a)\frown [M_-]\big) = j_*\big( a\frown [C_-]\big).
$$
But now, using the properties of the cap product as developed in \cite[\S 3.3]{hatcher:AT},
we have
$$
\begin{array}{rcll}
(p_-)_*\big( p^*_-(a)\frown [M_-]\big) & =  & a\frown (p_-)_*\big( [M_-]\big) & \comeq{by naturality of the cap product}\\
& = & a\frown j_*\big( [C_-]\big) & \comeq{because  $(p_-)_*\big( [M_-]\big) =j_*\big( [C_-]\big)$}\\
& = & j_*\big(a\frown [C_-]\big) & \comeq{by relative naturality of the cap product, and  $j^*(a)=a$.}
\end{array}
$$

\noindent Thus, the diagram commutes and we may now conclude that $p_-^*$ is a surjection.
\hfill \ding{52}

\medskip

\noindent  Finally, we turn to  the relationship between the cohomology of $C$ and that of $M$.

\begin{Claim}\label{M to C surjects}
The quotient map $p:M\longrightarrow C$ induces a surjection in cohomology
$$
p^*: H^*(C;\Z)\onto H^*(M;\Z).
$$
\end{Claim}

\noindent {\bf Proof of Claim~\ref{M to C surjects}.} 
We have long exact sequences in cohomology with integer coefficients 
for the pairs $(M,M_-)$ and $(C,C_-)$ that fit into a commutative diagram
\begin{eqnarray*} 
\begin{array}{c}
\xymatrix{
\cdots \ar[r] & H^*(C,C_-) \ar[d]_{\cong}^{\mbox{\ding{172}}} \ar[r] & H^*(C) \ar[d]^{\mbox{\ding{173}}}_{p^*} \ar[r] & H^*(C_-) 
\ar@{->>}[d]^{\mbox{\ding{174}}}_{p_-^*} \ar[r] & H^{*+1}(C,C_-) \ar[d]_{\cong}^{\mbox{\ding{175}}} \ar[r]  & \cdots \\
\cdots \ar[r] & H^*(M,M_-) \ar[r] & H^*(M) \ar[r] & H^*(M_-)  \ar[r] & H^{*+1}(M,M_-) \ar[r]  & \cdots .\\
}\end{array}
\end{eqnarray*}
Note that the maps \ding{172} and \ding{175} are isomorphisms by excision, and the map \ding{174} is onto by
Claim~\ref{C- surjects}.
The Four Lemma (the ``onto" half of the Five Lemma) states that if  \ding{172} and \ding{174} are onto and \ding{175} is one-to-one, then \ding{173} 
must be onto.  We have this for each degree, completing the proof.
\hfill \ding{52}

\medskip

Claim~\ref{cl:H*C} guarantees that the cohomology of $C$ is concentrated in even degrees.  Claim~\ref{M to C surjects} tells
us that $H^*(C;\Z)\stackrel{p^*}{\to} H^*(M;\Z)$ is surjective, and so $H^*(M;\Z)$ is necessarily concentrated in even degrees.
\end{proof}

Next we see how the conclusion of Theorem~\ref{thm:even cohomology} can fail
in the non-acyclic case.

\begin{nonexample}\label{eg:toric-torus}
The torus $\T^2$ is a toric origami manifold.  The (toric) circle action is rotation along one of the
coordinate circles.  The folding hypersurface consists of two disjoint circles, as shown in
Figure~\ref{fig:torictorus}. The orbit space consists of two superimposed identical intervals, glued to one another at each end.  The template graph has two vertices (one for each of the intervals) connected to one another by two edges (one for the top fold facet and one for the bottom fold facet), and therefore the template is not acyclic.
\begin{figure}[h]
\begin{center}
\includegraphics[height=1.5in]{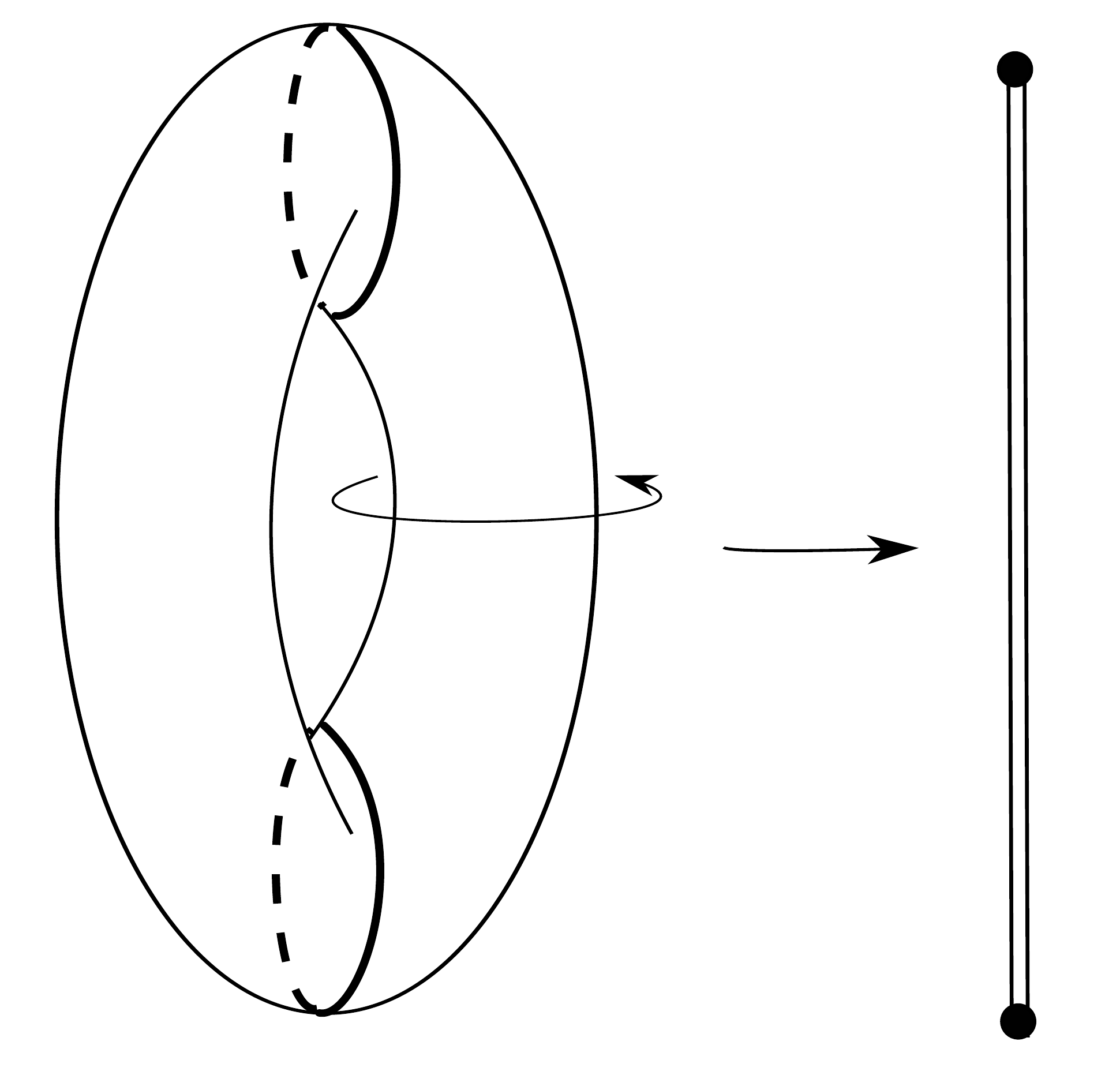}
\caption{The moment map for $\SS^1$ acting on $\T^2$.}\label{fig:torictorus}
\end{center}
\end{figure}

\noindent It is not hard to compute that
$$
H^k(\T^2;\Z) = \left\{\begin{array}{cl}
\Z & k=0,2\\
\Z\oplus \Z & k=1\\
0 & \mbox{else}
\end{array}\right. ,
$$
and so the conclusion of Theorem~\ref{thm:even cohomology} fails. 
\end{nonexample}

\section{Equivariant cohomology}
\label{se:eq-coh}

Equivariant cohomology is a generalized cohomology theory in the equivariant category.  We use the Borel model to compute equivariant cohomology.  For the torus $\T$, we let $E\T$ be a contractible space on which $\T$ acts freely.  Explicitly, for a circle,
we may choose $E\SS^1$ to be the unit sphere $\SS^\infty$ in a Banach space.  This is well-known to be contractible.  Since $\T=\SS^1\times\cdots\times\SS^1$ is a product, we may
 let $E\T$ be a product of infinite-dimensional spheres.

For any $\T$-space $X$, the diagonal action of $\T$ on $X\times E\T$ is free, and 
 $$
 X_{\T} = (X\times E\T)/\T
 $$
is the {\bf Borel mixing space} or {\bf homotopy quotient} of $X$.  We define the (Borel) equivariant cohomology ring to be
 $$
 H_{\T}^*(X;R) := H^*(X_{\T};R),
 $$
 where $H^*(-;R)$ denotes singular cohomology with coefficients in the commutative ring $R$.  Thus, when $X$ is a free ${\T}$-space, we may identify
 $$
 H_{\T}^*(X;R) \cong H^*(X/{\T};R).
$$
At the other extreme, if ${\T}$ acts trivially on $X$, then
 $$
 H_{\T}^*(X;R) \cong H^*(X\times B{\T};R),
$$
where $B{\T} = E{\T}/{\T}$ is the {\bf classifying space} of ${\T}$.  Note that the cohomology of the classifying space, $H^*(B{\T};R) \cong H_{\T}^*(pt;R)$, is the equivariant cohomology ring of a point.

For any $\T$-space $X$, we have the fibration
\begin{equation}\label{eq:fib}
X \hookrightarrow X_{\T} \longrightarrow B\T.
\end{equation}
The projection $X_{\T} \longrightarrow B\T$ induces the map $ H_{\T}^*(pt;R) \to H_{\T}^*(X;R)$, which turns  $H_{\T}^*(X;R)$ into an $H_{\T}^*(pt;R)$-module.  Natural maps in equivariant cohomology preserve this module structure.

A common tool in the computation of equivariant cohomology is the Serre spectral sequence applied to the fibration \eqref{eq:fib}.  
This has $E_2$-page
$$
E_2^{p,q} = H^p(B\T;H^q(X;R)).
$$
This spectral sequence converges to $H_{\T}^*(X;R)$.
When $X$ has cohomology concentrated in even degrees, then this spectral sequence is $0$ in every other row and column,
and automatically collapses.  In particular, the equivariant cohomology is also  concentrated in even degrees.

Goresky, Kottwitz and MacPherson call a $\T$-space $X$ {\bf equivariantly formal}
if the Serre spectral sequence collapses at the $E^2$-page \cite{GKM}.
This spectral sequence does collapse for a compact toric origami
manifold with acyclic origami template and co\"orientable folding hypersurface,
because the cohomology is concentrated in even degrees (Theorem~\ref{thm:even cohomology}).
Historically, the term ``formal'' has been used in rational homotopy theory, and so
equivariantly formal has multiple interpretations.  Scull describes the relationships between
these interpretations \cite{scull}.  To avoid further confusion, we will not use this term in the
remainder of this paper.

Suppose that a torus $\T$ acts on a compact manifold $M$.  Then the 
inclusion of the fixed points $I:M^{\T}\to M$ induces a map in equivariant
cohomology,
\begin{equation}\label{eq:fixed-pts}
I^*:H_{\T}^*(M;R)\to H_{\T}^*(M^{\T};R).
\end{equation}
A classical result of Borel establishes that the kernel and cokernel 
of $I^*$ are torsion submodules \cite{Bo:transf}. 
Our first step is to prove that in our set-up, $I^*$ is injective.
We can deduce this in a variety of ways.  We supply a constructive
proof here that we hope adds geometric intuition in the origami setting.

\begin{theorem}\label{thm:origami inj}
Let $\T\acts M$ be a compact toric origami with acyclic origami template and co\"orientable folding hypersurface.
Then the inclusion $I:M^\T\into M$ induces an injection in equivariant cohomology
$$
I^*:H_\T^*(M;\Z)\to H_\T^*(M^\T;\Z).
$$
\end{theorem}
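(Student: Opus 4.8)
The plan is to argue by induction on the number $n$ of vertices of the template graph, closely following the proof of Theorem~\ref{thm:even cohomology} and reusing the spaces $M_\pm$, $C_\pm$, $C$, $\mathcal{B}$, $\mathcal{Z}$ of Table~\ref{table:notation}. The base case $n=1$ is a compact symplectic toric manifold, whose $\T$-fixed set is finite and for which injectivity of $I^*$ is classical. For the inductive step I would first record two consequences of Theorem~\ref{thm:even cohomology}: since $C$ (by Claim~\ref{cl:H*C}) and the manifolds $M$, $C_-$, $C_+$, $\mathcal{B}$ all have cohomology concentrated in even degrees, the Borel spectral sequence collapses for each, so each of $H_\T^*(M;\Z)$, $H_\T^*(C;\Z)$, $H_\T^*(C_\pm;\Z)$, $H_\T^*(\mathcal{B};\Z)$ is a free, hence torsion-free, $H_\T^*(\mathrm{pt};\Z)$-module concentrated in even degrees. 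Consequently, every surjection in ordinary cohomology from the previous section --- Lemma~\ref{le:magic} (which, by the same proof, also applies with an origami ambient manifold), Claims~\ref{C- surjects} and~\ref{M to C surjects} --- upgrades to a surjection in equivariant cohomology by the graded Nakayama lemma. In particular $p^*\colon H_\T^*(C;\Z)\onto H_\T^*(M;\Z)$ is onto, the equivariant restriction $H_\T^*(C_-;\Z)\onto H_\T^*(\mathcal{B};\Z)$ is onto, and so the equivariant Mayer--Vietoris sequence for $C=C_+\cup_{\mathcal{B}}C_-$ splits into short exact sequences, exactly as in~\eqref{ShortMayerVietoris}.

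Granting these, the step divides into two parts. First I would prove that $I_C^*\colon H_\T^*(C;\Z)\into H_\T^*(C^\T;\Z)$ is injective: the split equivariant Mayer--Vietoris embeds $H_\T^*(C)$ into $H_\T^*(C_+)\oplus H_\T^*(C_-)$; composing with $I_{C_+}^*\oplus I_{C_-}^*$, which is injective by the inductive hypothesis and the base case, and then using that $C^\T=C_+^\T\cup C_-^\T$ --- so that the restriction $H_\T^*(C^\T)\to H_\T^*(C_+^\T)\oplus H_\T^*(C_-^\T)$ is injective --- naturality identifies this composite with $I_C^*$ followed by an injection, whence $I_C^*$ is injective. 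Second, I would transfer injectivity from $C$ to $M$ via the collapsing map $p\colon M\to C$. The geometric crux is that the null circle acts freely on each fold component, so $\mathcal{Z}^\T=\emptyset$; thus $H_\T^*(\mathcal{Z};\Z)$ is a torsion $H_\T^*(\mathrm{pt};\Z)$-module while $H_\T^*(M;\Z)$ is torsion-free, so in the equivariant Mayer--Vietoris sequence for $M=M_+\cup_{\mathcal{Z}}M_-$ the connecting homomorphism out of $H_\T^*(\mathcal{Z})$ vanishes and $H_\T^*(M)$ embeds into $H_\T^*(M_+)\oplus H_\T^*(M_-)$. Since $M^\T=M_+^\T\sqcup M_-^\T$ (again because $\mathcal{Z}^\T=\emptyset$), composing this embedding with injections $H_\T^*(M_\pm)\into H_\T^*(M_\pm^\T)$ and invoking naturality yields the injectivity of $I_M^*$.

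It remains to treat the pieces $M_\pm$, which are toric origami manifolds with boundary $\mathcal{Z}$: $M_-$ is a leaf and $M_+$ carries $n-1$ interior vertices. Each is equivariantly homotopy equivalent to $C_\pm$ with an open tubular neighborhood of $\mathcal{B}$ removed, and under this identification $p_\pm^*\colon H_\T^*(C_\pm)\to H_\T^*(M_\pm)$ becomes the equivariant restriction $H_\T^*(C_\pm)\to H_\T^*(C_\pm\setminus\mathcal{B})$. For the symplectic toric leaf $C_-$, the complement $C_-\setminus\mathcal{B}$ is a smooth (non-compact) toric manifold, for which injectivity of the fixed-point map is classical. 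For the origami piece $C_+$ one runs the same machinery, arranging the induction so that its statement also covers compact toric origami manifolds with fold boundary; here the equivariant Gysin sequence of the codimension-two submanifold $\mathcal{B}$, together with the surjectivity of $H_\T^*(C_\pm)\to H_\T^*(\mathcal{B})$, identifies the kernel of $H_\T^*(C_\pm)\to H_\T^*(C_\pm\setminus\mathcal{B})$ with the principal ideal $([\mathcal{B}])$, and one checks --- via the Chang--Skjelbred/GKM description of the symplectic toric pieces and its inductive propagation to the origami pieces --- that a class restricting to zero at every fixed point off $\mathcal{B}$ lies in $([\mathcal{B}])$.

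The main obstacle is precisely this last point: keeping careful track of the open pieces $M_\pm$ and identifying the kernels of the restriction maps to $H_\T^*(C_\pm\setminus\mathcal{B})$ --- equivalently, showing that a class which is ``supported on $\mathcal{B}$'' in the fixed-point picture is divisible by $[\mathcal{B}]$. Everything else is formal bookkeeping with Mayer--Vietoris, the graded Nakayama lemma, and naturality squares. (As remarked in the text, injectivity can also be obtained cheaply and non-constructively: $H_\T^*(M;\Z)$ is a free $H_\T^*(\mathrm{pt};\Z)$-module by Theorem~\ref{thm:even cohomology}, hence torsion-free, while Borel's localization theorem forces $\ker I^*$ to be a torsion submodule, so $\ker I^*=0$.)
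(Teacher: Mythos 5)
Your first step (injectivity for $C$ via the split equivariant Mayer--Vietoris sequence and the inductive hypothesis applied to $C_\pm$) is exactly the paper's Claim~\ref{GKM for C}. The genuine gap is in how you pass from $C$ to $M$. By decomposing $M=M_+\cup_{\mathcal{Z}}M_-$ you force yourself to prove injectivity of $H_\T^*(M_+;\Z)\to H_\T^*(M_+^\T;\Z)$ for the open origami piece $M_+\simeq C_+\setminus\mathcal{B}$, which is not covered by the inductive hypothesis (stated for closed toric origami manifolds), and your sketch for it does not close. It rests on the assertion that Lemma~\ref{le:magic} holds ``by the same proof'' with origami ambient manifold $C_+$; but that proof uses that the cohomology of a toric symplectic manifold is generated in degree $2$ by facet classes, which fails for origami manifolds (e.g.\ $\SS^4$), so the surjectivity $H_\T^*(C_+)\onto H_\T^*(\mathcal{B})$ feeding your Gysin-sequence identification of the kernel with $([\mathcal{B}])$ is unproven. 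Even granting it, you would still need surjectivity of $H_\T^*(C_+)\to H_\T^*(C_+\setminus\mathcal{B})$ and the divisibility statement that a class vanishing at all fixed points off $\mathcal{B}$ lies in $([\mathcal{B}])$ --- which you yourself flag as ``the main obstacle'' and do not prove. This is precisely the difficulty the paper's argument is engineered to avoid: instead of Mayer--Vietoris for $M=M_+\cup M_-$, it runs the Four Lemma on the long exact sequence of the pair $(M,M_-)$, using excision to identify $H_\T^*(M,M_-)\cong H_\T^*(C,C_-)$, relative injectivity in even degrees for $(C,C_-)$ (Claim~\ref{even injectivity}), and a Morse-theoretic argument for the leaf $M_-$ alone (Claim~\ref{cl:M_- injectivity}); no statement about $M_+$ is ever needed. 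Your treatment of $M_-$ (``classical'' injectivity for the noncompact toric manifold $C_-\setminus\mathcal{B}$) is essentially the content of Claim~\ref{cl:M_- injectivity}, which the paper proves via Tolman--Weitsman rather than quoting.

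Two further points. The vanishing of your Mayer--Vietoris connecting map, and the parenthetical ``cheap'' argument at the end, both use that $H_\T^*(M;\Z)$ is a torsion-free (indeed free) $H_\T^*(\pt;\Z)$-module. Theorem~\ref{thm:even cohomology} gives only even-degree concentration, not $\Z$-freeness of $H^*(M;\Z)$; you need the additional observation that $M$ is compact and orientable (acyclic template), so Poincar\'e duality plus universal coefficients rules out torsion in even degrees, and only then does the collapsed spectral sequence give a free module. With that observation supplied, your parenthetical argument (freeness plus Borel's theorem that $\ker I^*$ is a torsion submodule) is in fact a complete proof, in the spirit of the paper's base case and its remark that injectivity can be deduced in several ways; it would have been a sounder main route than the $M_+\cup M_-$ induction you propose.
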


\begin{proof}
We proceed by induction on the number of vertices in the template graph.

\vskip 0.1in

\noindent {\bf Base Case}:  Suppose the template graph has a single vertex.  Then $M$ is a toric symplectic manifold.  In particular, $M$ is K\"ahler
and has isolated fixed points.
Frankel showed that $H^*(M;\Z)$ is torsion free in this situation 
\cite[Corollary~2]{frankel}.
The Serre spectral sequence then has no torsion at the $E_2$ page, where it collapses, so we may conclude that
$H^*_\T(M;\Z)$ is torsion free.  As the fixed points are isolated,
$H_\T^*(M^\T;\Z)$ is also torsion free, and so Borel's classical result now
implies injectivity.

\vskip 0.1in

\noindent {\bf Inductive Step}:  We now assume that the statement holds for any acyclic toric origami manifold with 
co\"orientable fold with at most $(n-1)$ vertices in its template graph.

As in the previous section, we choose a leaf of the origami template, and let $\mathcal{Z}$ be the
connected component of the folding hypersurface
that corresponds to the facet separating the leaf from the rest of the origami template.  
We continue to use the auxiliary spaces $M_-$, $M_+$, $C_-$, $C_+$, $C$ and $\mathcal{B}$ as 
listed in Table~\ref{table:notation}.

\begin{Claim}\label{GKM for C}
The inclusion $C^\T \to C$ induces an injection
$$
H_\T^*(C;\Z)\to H_\T^*(C^\T;\Z).
$$
\end{Claim}

\vskip 0.1in

\noindent {\bf Proof of Claim~\ref{GKM for C}.} 
We note that $C_-$ is a toric symplectic manifold, and $C_+$ is a toric origami manifold with fewer vertices in its template graph.
Thus, in equivariant cohomology with integer coefficients,
$$
H_\T^*(C_-)\stackrel{I_-^*}{\to} H_\T^*(C^\T_-) \ \mbox{ and } \ H_\T^*(C_+)\stackrel{I_+^*}{\longrightarrow} H_\T^*(C^\T_+)
$$
are both injective.

We now consider the equivariant  Mayer-Vietoris long exact sequence for $\T$-invariant neighborhoods of 
$C=C_+\cup C_-$.  
The spaces $C$, $C_+$, $C_-$ and $\mathcal{B}$ each have ordinary cohomology only in even degrees,
and hence equivariant cohomology only in even degrees.  Thus, the
equivariant Mayer-Vietoris long exact sequence splits into short exact sequences.  
We then have a commutative diagram, with integer coefficients,
\begin{eqnarray*}
\begin{array}{c}
\xymatrix{
0 \ar[r] & H^*_\T(C) \ar[r]^(0.35){\mbox{\ding{173}}} \ar[d]_{\mbox{\ding{172}}} & H_\T^*(C_+)\oplus H_\T^*(C_-) \ar[r]\ar[d]_{\mbox{\ding{174}}} & H_\T^*(\mathcal{B}) \ar[r]\ar[d] & 0\\
0 \ar[r] & H_\T^*(C^\T) \ar[r] \ar[r]_(0.35){\mbox{\ding{175}}} & H_\T^*(C_+^\T)\oplus H_\T^*(C_-^\T) \ar[r] & H_\T^*(\mathcal{B}^\T) \ar[r] & 0
}
\end{array}.
\end{eqnarray*}
The map  \ding{173} is injective because the top row is short exact.    The map   \ding{174}  is $I_-^*\oplus I_+^*$, and is thus injective.  Therefore, 
$\mbox{\ding{174}}\circ\mbox{\ding{173}}$ is injective.  But $\mbox{\ding{174}}\circ\mbox{\ding{173}} = \mbox{\ding{175}}\circ\mbox{\ding{172}}$.  Hence,
\ding{172} must be injective.
\hfill \ding{52}

\begin{Claim}\label{even injectivity}
In even degrees, the map
$$
H^{2*}_\T(C,C_-) \to   H_\T^{2*}(C^\T,C_-^\T)  
$$
is injective.
\end{Claim}

\vskip 0.1in

\noindent {\bf Proof of Claim~\ref{even injectivity}.} 
The pair $(C,C_-)$ is $\T$-invariant, so we consider the long exact sequence of the pair in equivariant cohomology.  
By Claim~\ref{cl:H*C}, the cohomology of $C$ is concentrated in even degrees.  The space $C_-$ is a toric symplectic manifold, so its cohomology is also concentrated in
even degrees.  Thus the long exact sequence splits into a $4$-term short exact  sequence. This induces a commutative diagram
\begin{eqnarray*}
\begin{array}{c}
\xymatrix{
0 \ar[r] & H^{2*}_\T(C,C_-)  \ar[r]^(0.55){\mbox{\ding{173}}} \ar[d]_{\mbox{\ding{172}}} &  H_\T^{2*}(C) \ar[r]\ar[d]_{\mbox{\ding{174}}}  & H_\T^{2*}(C_-) \ar[r]\ar[d]&  H^{2*+1}_\T(C,C_-) \ar[r] \ar[d] & 0 \\
0 \ar[r] & H^{2*}_\T(C^\T,C^\T_-)  \ar[r]^(0.55){\mbox{\ding{175}}}&  H_\T^{2*}(C^\T) \ar[r] & H_\T^{2*}(C^\T_-) \ar[r] &  H^{2*+1}_\T(C^\T,C^\T_-) \ar[r] &  0 \\
}
\end{array}.
\end{eqnarray*}
The map  \ding{173} is injective because the top row is exact.    The map   \ding{174}  is injective by Claim~\ref{GKM for C}.  Therefore, 
$\mbox{\ding{174}}\circ\mbox{\ding{173}}$ is injective.  But $\mbox{\ding{174}}\circ\mbox{\ding{173}} = \mbox{\ding{175}}\circ\mbox{\ding{172}}$.  Hence,
\ding{172} must be injective.
\hfill \ding{52}

\begin{Claim}\label{cl:M_- injectivity}
The inclusion $M_-^\T\into M_-$ induces an injection
$
H_\T^*(M_-)\into H_\T^*(M_-^\T).
$
\end{Claim}

\vskip 0.1in

\noindent {\bf Proof of Claim~\ref{cl:M_- injectivity}.} 
Recall that $C_-$ is a toric symplectic manifold.
Let $f : C_- \to \R$ be the component of its moment map that attains its maximum value on $\mathcal{B}$.  Let $f(\mathcal{B})=b\in \R$.  Let $g: M_-\to \R$ be the composition
$M_- \stackrel{p_-}{\to} C_-\stackrel{f}{\to} \R$.
Choose $\varepsilon>0$ such that there is no critical value in between
$b-\varepsilon$ and $b$, and so that $g^{-1}((b-\varepsilon, b])$ is contained in the intersection of $M_-$ with a Moser neighborhood of $Z$ in $M$.

The fact that $f$ is a Morse-Bott function on $C_-$ with no critical values between $b-\varepsilon$ and $b$ guarantees that $f^{-1}((-\infty, b))$ and  
$f^{-1}((-\infty, b-\frac{\varepsilon}{2}])$ are homotopy equivalent.  In addition, 
the fact that $g^{-1}((b-\varepsilon, b])$ is contained in the intersection of $M_-$ 
with a Moser neighborhood of $\mathcal{Z}$ in $M$ guarantees that $f^{-1}((-\infty, b-\frac{\varepsilon}{2}])$ is homotopy equivalent to $M_-$.

We now appeal to a standard argument from equivariant symplectic geometry to conclude that 
$$
M_-^\T = f^{-1}\left(\left(-\infty, b-\frac{\varepsilon}{2}\right]\right)^\T \into f^{-1}\left(\left(-\infty, b-\frac{\varepsilon}{2}\right]\right) \simeq M_-
$$
induces an injection in equivariant cohomology.  This is an inductive argument on the critical set of $f$, and can be copied verbatim
from the proof of \cite[Theorem 2]{TW:hamTsp}.
\hfill \ding{52}

\vskip 0.1in

We now consider the long exact sequence in equivariant cohomology for the pair $(M,M_-)$. 
We have shown that $M_-$ and $M$ have cohomology and thus equivariant cohomology
concentrated in even degrees.  Thus the long exact sequence splits into a $4$-term short exact sequence.  
This induces a commutative diagram
\begin{eqnarray*}
\begin{array}{c}
\xymatrix{
 0 \ar[r]\ar[d]^{\mbox{\ding{172}}} & H_\T^{2*}(M,M_-) \ar[d]^{\mbox{\ding{173}}} \ar[r] & H_\T^{2*}(M) \ar[d]^{\mbox{\ding{174}}} \ar[r] & H_\T^{2*}(M_-) 
\ar[d]^{\mbox{\ding{175}}} \ar[r] &   H_\T^{2*+1}(M,M_-) \ar[r]\ar[d] & 0\\
0 \ar[r] & H_\T^{2*}(M^\T,M^\T_-) \ar[r] & H_\T^{2*}(M^\T) \ar[r] & H_\T^{2*}(M^\T_-)  \ar[r] &    H_\T^{2*+1}(M,M_-) \ar[r] & 0\\
}\end{array}.
\end{eqnarray*}
We want to show that \ding{174} is injective.  The Four Lemma (the ``injectivity" half of the Five Lemma) states that if  \ding{173} and \ding{175} are injective and \ding{172} is surjective, then \ding{174} 
must be injective.

We first note that $H_\T^{*}(M,M_-)\cong H_\T^{*}(C,C_-)$, and $H_\T^{*}(M^\T,M^\T_-)=H_\T^{*}(C^\T,C^\T_-)$.  Thus, the map \ding{173} is injective 
(in even degrees) by Claim~\ref{even injectivity}.  The map \ding{175} is injective by Claim~\ref{cl:M_- injectivity}.  The map \ding{172} is obviously
surjective.  
Thus, by the Four Lemma, the map \ding{174} must be injective, as desired.
\end{proof}

\begin{remark}
We may also derive Theorem~\ref{thm:origami inj} from work of Franz and Puppe \cite{franz-puppe}.
We describe this approach, and its further applications, in the proof of Theorem~\ref{thm:origami GKM} below.
\end{remark}

We now identify the image of $I^*$.
Goresky, Kottwitz, and MacPherson proved that
the equivariant cohomology of certain spaces may be described
combinatorially as $n$-tuples of polynomials with  divisibility
conditions on pairs of the polynomials \cite[Theorem 1.22]{GKM}.  
The description applies,
for example, to toric varieties  \cite[\S 2.2]{brion}, hypertoric varieties  \cite[Proposition~3.2]{haho}, 
and coadjoint orbits \cite[\S 7.8]{GKM}.  In this section,
we prove that the description also applies to any compact toric origami 
manifold with acyclic origami template and co\"orientable folding hypersurface.
We begin by recalling the assumptions and results from \cite{GKM}.
The two key assumptions are
\begin{enumerate}
\item[(A)] The fixed point set $M^{\T}$ consists of isolated points; and

\item[(B)] The {\bf one-skeleton} $M_{1} = \{ p\in M \ | \ \dim(\T\cdot p)\leq 1\}$
is $2$-dimensional.
\end{enumerate}

The first assumption simplifies what $H_{\T}^*(M^{\T};\Z)$ can be.  
When the fixed point set consists of isolated points, this ring is a direct product of copies of
$$
H_\T^*(pt;\Z)\cong \Z[x_1,\dots,x_n],
$$
one for each fixed point. Thus, every class can be represented as a tuple of polynomials, and the ring structure is the component-wise product of polynomials.

When $M$ is a compact Hamiltonian $\T$-space, the second
assumption ensures that the one-skeleton must consist of $2$-spheres
intersecting one another at the isolated fixed points.
Moreover, the $\T$-action preserves $M_1$, and the action
rotates each $\SS^2$ about an axis.  The image of $M_1$ under the moment
map is an immersed graph $\Phi(M_1) = \Gamma$  called the {\bf moment graph}\footnote{ \, 
The moment graph $\Gamma$ is sometimes called the {\bf GKM graph}.  It is
not the template graph.}
whose vertices correspond to the fixed points $M^{\T}$ and whose edges correspond to the embedded $\SS^2$'s.  Each edge $e$ in $\Gamma$ is labeled by the weight \footnote{ \, This is well-defined up to a sign, which is sufficient for our purposes.} $\alpha_e\in\algt^*$ by which $\T$ acts on $e$.  Indeed, the moment map sends the corresponding $\SS^2$ to a line segment parallel to the weight $\alpha_e$. The embedding of the graph $\Gamma$ encodes in this way the isotropy data, denoted $\alpha$.
In this framework, we have the following description of $H_{\T}^*(M;\Q)$.

\begin{theorem}[Goresky-Kottwitz-MacPherson \cite{GKM}]\label{thm:gkm}
Suppose $M$ is a compact Hamiltonian $\T$-space satisfying conditions {\rm (A)} and {\rm (B)} above.  
Then $I^*$ is injective
$$
I^*:H_{\T}^*(M;\Q)\hookrightarrow H_{\T}^*(M^{\T};\Q)\cong \bigoplus_{p\in M^{\T}} H_{\T}^*(pt;\Q),
$$ 
and its image consists of
\begin{equation}\label{eq:gkm}
 \Big\{ \, (f_p)\in \bigoplus_{p\in M^{\T}} H_{\T}^*(pt;\Q)\ \Big|\ \alpha_e \big| (f_p-f_q) \mbox{ for each edge } e=(p,q) \mbox{ in } \Gamma\, \Big\}.
\end{equation}
We will refer to these divisibility conditions  as the {\bf GKM description}.  
\end{theorem}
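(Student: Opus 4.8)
The plan is to separate the argument into two independent pieces: injectivity of $I^*$, and the identification of its image with the tuples satisfying the divisibility conditions \eqref{eq:gkm}. Injectivity comes from equivariant formality, while the image computation rests on a Chang--Skjelbred-type reduction to the one-skeleton $M_1$, where assumption (B) allows the conditions to be read off edge by edge.

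For injectivity, I would first choose a generic $\xi\in\algt$ so that $\phi^\xi:M\to\R$ is a Morse--Bott function. By the Atiyah and Guillemin--Sternberg theory its critical set is exactly $M^\T$, which by (A) is a finite set of isolated points. Because the tangent space at a fixed point splits into two-dimensional $\T$-weight spaces on each of which the Hessian of $\phi^\xi$ has constant sign, every Morse index is even; hence $H^{\mathrm{odd}}(M;\Q)=0$ and $H^*(M;\Q)$ is concentrated in even degrees. As recalled just before Theorem~\ref{thm:origami inj}, concentration in even degrees forces the Serre spectral sequence of $M\into M_\T\to B\T$ to collapse at $E_2$, so $M$ is equivariantly formal and $H_\T^*(M;\Q)$ is a free $H_\T^*(\mathrm{pt};\Q)$-module. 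By Borel's classical result the kernel of $I^*$ is a torsion submodule, which is trivial since the module is free; thus $I^*$ is injective.

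The containment $\operatorname{im}I^*\subseteq\eqref{eq:gkm}$ is the easy half. For an edge $e=(p,q)$ of $\Gamma$, let $S_e\subset M_1$ be the corresponding invariant $2$-sphere; the torus acts on $S_e$ through the rotation of the circle $\T/H_e$, where $H_e=\ker\alpha_e$ is the codimension-one stabilizer of its generic points, and $S_e^\T=\{p,q\}$. A direct computation in $H_\T^*(S^2;\Q)$ shows that the restriction $H_\T^*(S_e;\Q)\to H_\T^*(\{p,q\};\Q)$ has image exactly $\{(f_p,f_q)\mid \alpha_e\,|\,(f_p-f_q)\}$. Restricting a global class through each $S_e$ and using naturality of $I^*$ then yields $\alpha_e\,|\,(f_p-f_q)$ for every edge, so $\operatorname{im}I^*$ lies in \eqref{eq:gkm}.

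The reverse containment is the main obstacle. Here I would invoke the Chang--Skjelbred lemma: for an equivariantly formal space the sequence $0\to H_\T^*(M;\Q)\xrightarrow{I^*}H_\T^*(M^\T;\Q)\xrightarrow{\delta}H_\T^{*+1}(M_1,M^\T;\Q)$ is exact, so that $\operatorname{im}I^*=\ker\delta$ coincides, via the long exact sequence of the pair $(M_1,M^\T)$, with the image of $H_\T^*(M_1;\Q)\to H_\T^*(M^\T;\Q)$. Deriving this exactness from equivariant formality (the Atiyah--Bredon argument on the orbit-type filtration) is the technical heart of the theorem. Granting it, assumption (B) guarantees, as recalled in the excerpt, that $M_1$ is a union of the $2$-spheres $S_e$ meeting only at the isolated fixed points; a Mayer--Vietoris computation over this graph of spheres shows that, because the pairwise intersections contribute only copies of $H_\T^*(\mathrm{pt};\Q)$, the per-edge conditions impose no further compatibilities, and the image of $H_\T^*(M_1;\Q)$ in $H_\T^*(M^\T;\Q)$ is precisely the set \eqref{eq:gkm}. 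Combining the two containments identifies $\operatorname{im}I^*$ with \eqref{eq:gkm}, completing the proof.
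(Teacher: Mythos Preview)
Your proposal is correct and follows essentially the same approach as the paper's (brief) sketch: the paper cites this as the GKM theorem and indicates that the key technical tool is the Chang--Skjelbred Lemma, which identifies $I^*(H_\T^*(M))$ with $J^*(H_\T^*(M_1))$, after which each $\SS^2$ in the one-skeleton contributes one divisibility condition. Your write-up simply fleshes out the injectivity step (via equivariant formality from even-degree cohomology and Borel's torsion result) and the Mayer--Vietoris bookkeeping on the one-skeleton, which the paper leaves implicit.
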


\begin{remark}
For a Hamiltonian $\T$-space, assumption (A) guarantees that $I^*$ is injective in equivariant
cohomology with integer coefficients.  We may strengthen assumption (B) to guarantee
that the GKM description holds over $\Z$.  A stronger set of assumptions are described 
in \cite[\S3]{HHH}; they include the existence of a cell decomposition of the manifold.  
In particular, for Hamiltonian $\T$-spaces with isolated points, Morse
theory can be applied to a generic component of the moment map 
to establish that
these stronger assumptions boil down to local topological properties that must be
checked at the fixed points.  These can then be verified for symplectic toric manifolds
and for coadjoint orbits.

As we have seen, the moment map for a toric origami manifold $M$
does not necessarily produce Morse functions on $M$.  We do not know if there
is a cell decomposition of a toric origami manifold that would allow us to apply
techniques from \cite{HHH}.
\end{remark}

A key technical tool in the proof of Theorem~\ref{thm:gkm} is the Chang-Skjelbred Lemma
\cite[Lemma~2.3]{CS:injectivity}.  Let $J: M^{\T}\to M_1$ denote the inclusion of the fixed points into 
the one-skeleton.  The Chang-Skjelbred Lemma states that $I^*(H_{\T}^*(M)) = J^*(H_{\T}^*(M_1))$.
Since the one-skeleton consists of $\SS^2$'s, we must understand $H_\T^*(\SS^2)$.  It is a simple calculation
to check that each $\SS^2$ contributes one of the divisibility conditions in \eqref{eq:gkm}.

Now let $\T\acts M$ be a compact toric origami manifold with acyclic origami template and with co\"orientable folding hypersurface.
The fixed points $M^\T$ correspond to the $0$-dimensional faces of the orbit space $M/\T$.  Just
as for toric symplectic manifolds, these are isolated fixed points.
The one-skeleton corresponds to the (possibly folded) edges ($1$-dimensional faces) of the orbit space. These are the $1$-dimensional faces of the polytopes of the symplectic cut pieces that are not 
entirely contained in a fold. The corresponding subsets of $M$ are symplectic or origami $2$-spheres.
Therefore the one-skeleton is $2$-dimensional.  An example is shown in Figure~\ref{fig:folded-hirz}.

\begin{figure}[h]
\begin{center}
\includegraphics[width=5.75in]{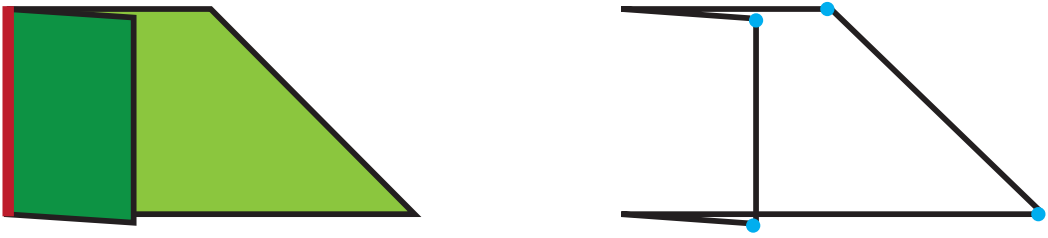}
\caption{
The orbit space and the GKM graph for a toric origami structure on the Hirzebruch surface.
The GKM graph has four vertices and four edges, two of which are folded.
}\label{fig:folded-hirz}
\end{center}
\end{figure}

Thus, assumptions (A) and (B) are satisfied in the case of toric origami manifolds, and indeed the GKM theorem 
generalizes to our set-up.

\begin{theorem}\label{thm:origami GKM}
Let $\T\acts M$ be a compact toric origami with acyclic origami template and co\"orientable folding hypersurface.
Then $I^*$ is injective
$$
I^*:H_{\T}^*(M;\Z)\hookrightarrow H_{\T}^*(M^{\T};\Z)\cong \bigoplus_{p\in M^{\T}} H_{\T}^*(pt;\Z),
$$ 
and the image consists of
\begin{equation}\label{eq:origami gkm}
 \Big\{ (f_p)\in \bigoplus_{p\in M^{\T}} H_{\T}^*(pt;\Z)\ \Big|\ \alpha_e \big| (f_p-f_q) \mbox{ for each edge } e=(p,q) \mbox{ in } \Gamma\Big\},
\end{equation}
where $\alpha_e$ is the weight of the action $\T\acts \SS^2_e$ on the $2$-sphere corresponding to $e$.
\end{theorem}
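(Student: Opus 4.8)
The plan is to combine the injectivity statement of Theorem~\ref{thm:origami inj} with the Chang--Skjelbred Lemma, exactly as in the classical GKM argument, and then to check that the one-skeleton $M_1$ contributes precisely the divisibility conditions in \eqref{eq:origami gkm}. The injectivity of $I^*$ over $\Z$ has already been established in Theorem~\ref{thm:origami inj}. For the image, the key input is that $H_\T^*(M;\Z)$ is a free module over $H_\T^*(pt;\Z)$ (this follows from Theorem~\ref{thm:even cohomology}: the cohomology is concentrated in even degrees, so the Serre spectral sequence of \eqref{eq:fib} collapses and $H_\T^*(M;\Z)$ is free as an $H_\T^*(pt)$-module), which is what the Chang--Skjelbred Lemma requires. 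One then concludes that $I^*(H_\T^*(M)) = J^*(H_\T^*(M_1))$, where $J: M^\T \to M_1$ is the inclusion of the fixed points into the one-skeleton.

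The next step is to identify $M_1$ and compute $J^*(H_\T^*(M_1))$. As described just before the theorem, the one-skeleton of a toric origami manifold corresponds to the $1$-dimensional faces of the orbit space $M/\T$ that are not contained in a fold; each such face lifts to an embedded $2$-sphere $\SS^2_e$ in $M$, which is either a symplectic toric $2$-sphere or an origami toric $2$-sphere (with two cut pieces glued along a fold circle). In either case $\SS^2_e$ carries a $\T$-action that factors through a circle quotient and rotates the sphere, fixing its two poles $p, q$, which are among the $\T$-fixed points of $M$. The picture is that $M_1$ is a union of these $2$-spheres, glued to one another at the fixed points $M^\T$, so that $H_\T^*(M_1;\Z)$ can be computed from a Mayer--Vietoris / pushout argument over the fixed points, reducing to understanding each $H_\T^*(\SS^2_e;\Z)$ and the restriction $J^*$ to its two poles.

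The computation of $H_\T^*(\SS^2_e;\Z)$ and its image under restriction to the poles is the one genuinely new point compared to the symplectic case, and I expect it to be the main obstacle — though a mild one. For a symplectic rotated $2$-sphere this is the standard computation giving the single condition $\alpha_e \mid (f_p - f_q)$. For an origami $2$-sphere, I would argue that it also has $\T$-equivariant cohomology concentrated in even degrees (by the $n=2$ case of Theorem~\ref{thm:even cohomology}, i.e.\ \cite[Corollary~5.1]{CGP:origami}, together with the spectral sequence argument), hence is a free $H_\T^*(pt)$-module of rank $2$, and that the restriction to the two fixed poles $\{p,q\}$ is injective with image exactly $\{(f_p,f_q) : \alpha_e \mid (f_p - f_q)\}$, where $\alpha_e$ is the rotation weight — the same form of condition as in the symplectic case. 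This last claim can be checked directly: $(\SS^2_e)_\T$ retracts, up to the relevant homotopy, onto a space built from two copies of a disc bundle glued along a circle bundle, and the Mayer--Vietoris sequence in equivariant cohomology yields the stated image; alternatively one invokes the explicit origami model of $\SS^2_e$ as the radial blow-up of two copies of $\C\P^1$. Granting this, assembling the local conditions over all edges $e$ of the GKM graph $\Gamma$ via the pushout description of $H_\T^*(M_1;\Z)$ produces exactly the ring in \eqref{eq:origami gkm}, and since $I^*(H_\T^*(M)) = J^*(H_\T^*(M_1))$ equals that image, the proof is complete.
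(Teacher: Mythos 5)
Your overall architecture is the same as the paper's: injectivity from Theorem~\ref{thm:origami inj}, a Chang--Skjelbred-type identification $I^*(H_\T^*(M)) = J^*(H_\T^*(M_1))$, and a sphere-by-sphere computation showing that each symplectic or origami $2$-sphere $\SS^2_e$ contributes the single condition $\alpha_e \mid (f_p - f_q)$. That last step is fine and is treated just as briefly in the paper.

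The gap is in your justification of the Chang--Skjelbred step with integer coefficients. First, concentration of $H^*(M;\Z)$ in even degrees gives collapse of the Serre spectral sequence, but it does not by itself give that $H_\T^*(M;\Z)$ is a \emph{free} $H_\T^*(pt;\Z)$-module: for that you would also need $H^*(M;\Z)$ to be torsion-free as an abelian group, which is not what Theorem~\ref{thm:even cohomology} asserts and is not established for general acyclic templates. Second, and more seriously, the Chang--Skjelbred Lemma of \cite{CS:injectivity} is a rational statement; over $\Z$ freeness alone is not known to suffice, and the integral conclusion can fail when points have disconnected stabilizers. The paper avoids both problems by appealing to Franz and Puppe \cite[Theorem~1.1]{franz-puppe}: since every point of a toric origami manifold has stabilizer a connected subtorus (a consequence of the template classification), collapse of the Serre spectral sequence --- which \emph{does} follow from Theorem~\ref{thm:even cohomology} alone --- is equivalent to exactness of $0 \to H_\T^*(M;\Z) \to H_\T^*(M^{\T};\Z) \to H_\T^{*+1}(M_1, M^{\T};\Z)$, which is precisely the integral Chang--Skjelbred conclusion you need. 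So your argument is repaired by replacing ``freeness plus classical Chang--Skjelbred'' with ``connected stabilizers plus Franz--Puppe''; as written, that step is not justified.
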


\begin{proof}
In Theorem~\ref{thm:origami inj}, we have established that $I^*$ is injective (over $\Z$).
This can also be derived from an algebraic result of Franz and Puppe.  In \cite[Theorem~1.1]{franz-puppe}, for a $\T$-space $X$ with connected stabilizers, they show that five conditions are equivalent.
Their condition (ii) is that the Serre spectral sequence collapses at the $E_2$-page.  Their condition
(v) gives a long exact sequence.

A consequence of the origami template classification of toric origami manifolds is that the stabilizer 
of a point is a connected subtorus of $\T$.  Thus, we may appeal to Franz and Puppe's theorem.
Our Theorem~\ref{thm:even cohomology} implies that the Serre spectral sequence collapses
at the $E_2$-page, assertion (ii) in \cite[Theorem~1.1]{franz-puppe}.
This is then equivalent to assertion (v) 
which gives a long exact sequence, the first few terms of which are
$$
      0
      \to H^*_\T(M;\Z)
      \stackrel{\mbox{\ding{172}}}{\to} H^*_\T(M_0;\Z)
      \stackrel{\mbox{\ding{173}}}{\to} H^{*+1}_\T(M_1, M_0;\Z).
$$
The content of our Theorem~\ref{thm:origami inj} is that \ding{172} (which is $I^*$) is injective.  
That the sequence is exact then means that the image of \ding{172} is equal to the kernel
of \ding{173}.  The map \ding{173} is the boundary map in the long exact sequence of the pair
$(M_1,M_0)$.  Thus we have
$$
\cdots \to H_\T^*(M_1^*;\Z) \stackrel{\mbox{\ding{174}}}{\to} H_\T^*(M_0;\Z) \stackrel{\mbox{\ding{173}}}{\to} H_\T^{*+1}(M_1,M_0;\Z)\to\cdots.
$$
The kernel of \ding{173} is then equal to the image of \ding{174}, which is
the image of the equivariant cohomology of the one-skeleton in $H_\T^*(M_0;\Z)$. The fact that
the one-skeleton consists of symplectic and origami $2$-spheres means that each $\SS^2$ contributes 
one of the divisibility conditions in \eqref{eq:origami gkm}.  
\end{proof}

In Section~\ref{se:even}, we proved that $H^*(M;\Z)$ is concentrated in even degrees.  We do not have a Morse
function on $M$ that would allow us to compute the ranks of these cohomology groups.
With our explicit description of $H_\T^*(M;\Z)$, it is possible in examples to determine the ranks and ring structure
of $H^*(M;\Z)$.  This is a consequence of the collapse of the Serre spectral sequence, which implies that
$$
H^*(M;\Z) \cong H_\T^*(M;\Z)\otimes_{H^*_\T(pt;\Z)} \Z.
$$

\begin{example}\label{eg:sphere}
The $2n$-sphere $\SS^{2n}$ may be endowed with toric origami structure whose template graph has two vertices and a single edge between them. Each of the two vertices maps to a the $n$-simplex in $\R^n$ with an orthogonal corner at the origin; that is,
a simplex with vertices the origin and the standard basis vectors $e_i=(0,\dots,1,\dots,0)$ with a single $1$ in the $i^{th}$ co\"ordinate and $0$s elsewhere. The edge maps to the fold facet by which these two polytopes are glued together: the $(n-1)$-simplex with vertices the $e_i$, opposite the
origin.  The orbit space for $\SS^4$ is shown in Figure~\ref{fig:S4}.

Thus the toric action has $2$ fixed points, which we denote $N$ and $S$ (for the north and south poles).  
There are $n$ edges in the GKM graph, each joining $\Phi(N)$ and $\Phi(S)$. 
We can  identify $H_\T^*(pt;\Z) =\Z[x_1,\dots,x_n]$.
From the representation of the orbit space in $\R^n$ we can see that the $\T$-action on the sphere mapping to the
$i^{th}$ co\"ordinate line in $\R^{n}$ has weight $x_i$.
Theorem~\ref{thm:origami GKM} states that
$$
I^*(H_\T^*(\SS^{2n};\Z)) = \Big\{ \, (f_N,f_S)\in \Z[x_1,\dots,x_n]\oplus\Z[x_1,\dots,x_n]\, \Big|
\, x_i\big| (f_N-f_S) \, \mbox{ for } i=1,\dots,n\,\,\Big\}.
$$
From this, we can find a module basis (for $H_\T^*(\SS^{2n};\Z)$ as an $H_\T^*(pt;\Z)$-module)
with two elements
$$
I^*(\mathds{1}) = (1,1) \mbox{ and } I^*(\pi) = (x_1\cdots x_n,0),
$$
where $\mathds{1}\in H_\T^0(\SS^{2n};\Z)$ and $\pi\in H_\T^{2n}(\SS^{2n};\Z)$.

\end{example}

\begin{nonexample}
We revisit Nonexample \ref{eg:toric-torus}, of a toric circle action on a torus. The circle action is free, and so has no fixed points.  Nevertheless, we may 
compute  
$$
H^k_{\SS^1}(\T^2;\Z)= H^k(\T^2/\SS^1;\Z) = H^k(\SS^1) = \left\{\begin{array}{cl}
\Z & k=0,1\\
0 & \mbox{else}
\end{array}\right. .
$$
In particular, the conclusion of Theorem~\ref{thm:origami inj} cannot hold.
\end{nonexample}


\section{Toric origami manifolds are locally standard}\label{sec:std}

Toric topology is the study of topological analogues of toric symplectic manifolds and toric varieties.
The symplectic or algebraic structure is dropped, and the focus is the existence of an effective smooth 
action of a torus half the dimension of the manifold. Examples of such topological analogues,
from most restrictive to most general, are 
toric manifolds \cite{davisjanus} (referred to by some authors as quasitoric manifolds), topological 
toric manifolds \cite{ishidafukukawamasuda} and torus manifolds \cite{masu99}.

We now show that acyclic toric origami manifolds fit into the framework of torus manifolds, and that 
Theorem~\ref{thm:even cohomology} also follows from the work of Masuda
and Panov on the cohomology of torus manifolds \cite{masuda-panov}.  
Their theory is more general and their proofs algebraic.

A {\bf torus manifold} is a $2n$-dimensional closed connected orientable
smooth manifold $M$ with an effective smooth action of an $n$-dimensional
torus $\T^n$ with non-empty fixed set.  
A torus manifold $M$ is said to be {\bf locally standard} if every point in
$M$ has an invariant neighbourhood $U$ weakly equivariantly diffeomorphic to
an open subset $W\subset\C^n$ invariant under the standard $\T^n$-action on
$\C^n$. The adverb `weakly' means that there is an automorphism $\rho\colon \T\to \T$
and a diffeomorphism $f\colon U\to W$ such that $$f(ty)=\rho(t)f(y)$$ for all
$t\in \T$, \ $y\in U$.

Compact symplectic toric manifolds are locally standard \cite[Proof of Lemme 2.4]{de:hamiltoniens}. Next we will prove that toric origami manifolds with co\"orientable folding hypersurface are also locally standard. Toric origami manifolds with non-co\"orientable components of the fold are not locally standard. Indeed, an invariant neighborhood of a point on a non-co\"orientable component of the fold is a bundle of M\"obius bands over the corresponding connected component of $B$ \cite[Rmk.~2.26]{CGP:origami}, which is not equivariantly diffeomorphic to an invariant open subset of $\T^n\acts\mathbb{C}^n$.

\begin{lemma}\label{le:loc-std}
Suppose that $(M,Z,\omega,\Phi,\T)$ is a toric origami manifold 
with co\"orientable folding hypersurface.
Then $M$ is locally standard.
\end{lemma}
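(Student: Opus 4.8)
The plan is to verify local standardness separately at three types of points of $M$: points away from the fold $Z$, points on $Z$ whose image under $\Phi$ lies in the interior of a fold facet, and points on $Z$ whose image lies on a lower-dimensional face of a fold facet. Away from $Z$, a neighborhood of a point $p\in M\setminus Z$ lies in the open symplectic manifold $W_\pm$ containing $p$, which is part of a symplectic cut piece --- a compact symplectic toric manifold. Since compact symplectic toric manifolds are locally standard by Delzant's construction (as cited, \cite[Proof of Lemme 2.4]{de:hamiltoniens}), we get a weakly equivariant chart around $p$ immediately.

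For points on the fold, I would use the explicit local normal form for origami manifolds near $Z$. By the Folded Darboux Theorem \cite[IIIA.4.2.2]{ma:formes} and, more to the point, the structure of a Moser/tubular neighborhood of $Z$ as described in the cited results of \cite{CGP:origami} (Proposition~\ref{prop:model} and the radial blow-up picture), a $\T$-invariant neighborhood of a connected component $\mathcal{Z}$ of the fold is equivariantly diffeomorphic to a model built from the radial projectivized normal bundle of $\mathcal{B}=\mathcal{Z}/\SS^1$ inside the symplectic cut space. Concretely, since the fold is co\"orientable, such a neighborhood looks like $\mathcal{B}\times(-\epsilon,\epsilon)\times \SS^1$ twisted appropriately, i.e. the radial blow-up of a neighborhood of $\mathcal{B}$ in a symplectic toric manifold. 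Because $\mathcal{B}$ is itself a compact symplectic toric manifold (of dimension $\dim M - 2$) and is locally standard, and because the radial blow-up operation replaces a standard $\C^n$-chart transverse to $\mathcal{B}$ with the standard chart on $\C\times\C^{n-1}$ in which the first coordinate records the "unfolded radial" direction, one checks directly that the model neighborhood is weakly equivariantly diffeomorphic to an invariant open subset of $\C^n$ with the standard $\T^n$-action. The key local computation is that the map $(r,\theta)\mapsto r^2 e^{i\theta}$ (folding the punctured disk in $\C$ onto an annulus, then across the fold) intertwines the standard $\SS^1$-action on $\C$ with itself, so the origami chart transverse to the fold is genuinely the standard $\C$-chart.

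Assembling these: at a point $p\in Z$ with $\Phi(p)$ in the relative interior of the fold facet, only the one transverse $\C$-factor is "folded" and the remaining $\C^{n-1}$ factors come from a standard chart of $\mathcal{B}$; at a point with $\Phi(p)$ on a deeper face, one iterates, using that $\mathcal{B}$ near that face is itself standard. In every case the neighborhood is weakly equivariantly diffeomorphic to an open $\T^n$-invariant $W\subset\C^n$, which is exactly local standardness. I would also remark, following the paragraph before the lemma statement, why co\"orientability is essential: for a non-co\"orientable fold component the transverse model is a bundle of M\"obius bands rather than of annuli \cite[Rmk.~2.26]{CGP:origami}, and no M\"obius band embeds $\T^n$-equivariantly as an invariant open subset of $\C^n$.

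The main obstacle I expect is making the normal-form argument near the fold fully rigorous and equivariant: one must carefully extract, from the origami structure theory of \cite{CGP:origami}, a $\T$-equivariant identification of a neighborhood of $\mathcal{Z}$ with the radial blow-up model, and then match the $\T$-action on that model with the standard torus action on $\C^n$, keeping track of the automorphism $\rho$ of $\T$ that "weakly" allows. The bookkeeping of which circle subgroup acts on the folded $\C$-direction --- it is the subgroup whose orbits are the null-fibration circles --- is where care is needed, but it is precisely controlled by the origami template data via Theorem~\ref{thm:origamiDelzant}.
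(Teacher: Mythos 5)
Your overall strategy --- Delzant's local-standardness argument away from the fold, plus a normal-form analysis of a co\"orientable tubular neighborhood of the fold in which the base $\mathcal{B}=\mathcal{Z}/\SS^1$ of the null fibration is handled by local standardness of compact toric symplectic manifolds --- is exactly the paper's, and your closing remark contrasting annulus bundles with M\"obius-band bundles is also how the paper motivates the co\"orientability hypothesis. However, your ``key local computation'' at the fold is wrong as stated, and it is the one step that actually has to be right. The map $(r,\theta)\mapsto r^2e^{i\theta}$ is not injective near $r=0$ (it identifies $(r,\theta)$ with $(-r,\theta)$ and collapses the fold circle $\{r=0\}$ to the origin), so it cannot serve as the weakly equivariant diffeomorphism onto an open subset of $\C$ that local standardness requires; it is essentially the blow-down map to the cut space, which is irrelevant here. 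Worse, it places the fold point at the origin of the transverse $\C$-factor, i.e.\ at a fixed point of the transverse circle --- but the null-fibration circle acts \emph{freely} on a co\"orientable fold component, so the fold point must land in an invariant open subset of $\C\smallsetminus\{0\}$.

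The correct transverse model, which is what the paper uses, is simply the annulus: co\"orientability provides a Moser model $\mathcal{Z}_p\times(-\varepsilon,\varepsilon)\cong\mathcal{U}$ for a neighborhood of the fold component through $p$; local standardness of the compact toric symplectic manifold $\mathcal{B}$ gives a chart $V\cong W\subset\C^{n-1}$ around $\pi(p)$; local triviality of the principal $\SS^1$-bundle $\mathcal{Z}_p\to\mathcal{B}$ over $V$ then exhibits a neighborhood of $p$ as $V\times\SS^1\times(-\varepsilon,\varepsilon)$, and $\SS^1\times(-\varepsilon,\varepsilon)$ with the rotation action embeds equivariantly as an open annulus $A\subset\C$, yielding $W\times A\subset\C^{n-1}\times\C$. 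No folding or squaring of the radial coordinate occurs. Once this is corrected, your separate third case (fold points mapping to lower-dimensional faces of the fold facet) is unnecessary: local standardness of $\mathcal{B}$ already covers every point of $\mathcal{B}$, including those over deeper faces, so no iteration is needed.
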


\begin{proof}
The argument used in \cite[Proof of Lemme 2.4]{de:hamiltoniens} to prove that compact symplectic toric manifolds are locally standard does not use compacteness of the manifold, and therefore applies directly to the manifold $M\setminus Z$.

Next, we check the `locally standard' condition on a point $p\in Z$ on the fold.  We use a Moser model, as defined in \cite[Def.\ 2.12]{CGP:origami}, for
a neighborhood of $p$.  As remarked in \cite{CGP:origami}, such Moser models exist for orientable origami manifolds.  
What is necessary for the local existence of the Moser model near a single component of the fold is simply the 
co\"orientability of that piece of the fold.  Thus, we may assume that $p\in Z$ has a neighborhood with a Moser model.

Let $\mathcal{Z}_p$ denote the connected component of $Z$ containing $p$.  The local Moser model is an equivariant 
diffeomorphism
$$
\varphi: \mathcal{Z}_p \times (-\varepsilon,\varepsilon) \to \mathcal{U},
$$
where $\varepsilon>0$ and $\mathcal{U}$ is a tubular neighborhood of $\mathcal{Z}_p$, such that $\varphi(x,0)=x$ for all 
$x\in \mathcal{Z}_p$.  The symplectic form can be written in these co\"ordinates, but we do not need that here.

We now consider the null-fibration $\SS^1\hookrightarrow \mathcal{Z}_p\stackrel{\pi}{\longrightarrow} B_p$.  This is a principal 
$\SS^1$-bundle, and the base
space is a compact symplectic toric manifold of dimension $(2n-2)$.  Let $b=\pi(p)$.  Compact toric symplectic manifolds
are locally standard.  Choose a neighborhood $V$ of $b\in B_p$ that is weakly equivariantly diffeomorphic to an open 
subset  $W\subset \C^{n-1}$ that is invariant with respect to the standard $\T^{n-1}$-action on $\C^{n-1}$.  By possibly
passing to a smaller neighborhood of $b$, we may assume that the bundle over $V$ is trivial, 
$V\times \SS^1\stackrel{\pi}{\longrightarrow} V$.  Thus, we have an equivariant neighborhood
$$
V\times \SS^1 \times  (-\varepsilon,\varepsilon)
$$
of $p\in \mathcal{Z}_p$.  Under this identification, the action of $\T^n$ splits into the $\T^{n-1}$ action on $V$, and $\SS^1$ acting
on itself by multiplication on the  $\SS^1$.  We may embed $\SS^1\times (-\varepsilon,\varepsilon)$ as an
open annulus $A\subset \C$ by equivariant diffeomorphism.  Therefore $V\times \SS^1 \times  (-\varepsilon,\varepsilon)$
is weakly equivariantly diffeomorphic to an open 
subset  $W\times A\subset \C^{n-1}\times \C$ that is invariant with respect to the co\"ordinate $\T^{n}$-action on the vector space $\C^{n}$. \footnote{\, An alternative proof for this Lemma was pointed out to us by one of the referees: it uses the fact that any submanifold of $M$ consisting of points with the same isotropy subgroup is tranverse to the folding hypersurface $Z$. This fact relies strongly on the co\"orientability hypothesis.}
\end{proof}

A key player in Masuda and Panov's work on torus manifolds is the orbit space 
$Q =M/\T$, which in the origami framework is closely related to the origami template, as explained at the end of Section~\ref{sec:background}. 
Masuda and Panov define the faces of the orbit space using their notion of characteristic
submanifold.  The orbit space is then called {\bf face-acyclic} if every face $F$ (including $Q$
itself) is {\bf acyclic}: that is, it has $\widetilde{H}^*(F)=0$.

Note that the orbit space $M/\T$ deformation retracts onto the template graph: each polytope $\Psi_V(v)$ deformation retracts onto a point in its center and rays from that point to each of the fold facets of that polytope. This can be done so that when two polytopes are glued along a fold facet, the rays from the center points of the two polytopes join at the fold facet: the two rays now form a line between the center points of the two polytopes. Viewing the center points of the polytopes as vertices of a graph and the lines joining them as edges, we recover the template graph. An example is provided in Figure~\ref{fig:retracts}.

\begin{figure}[ht]
\centering{
\includegraphics[height=1.5in]{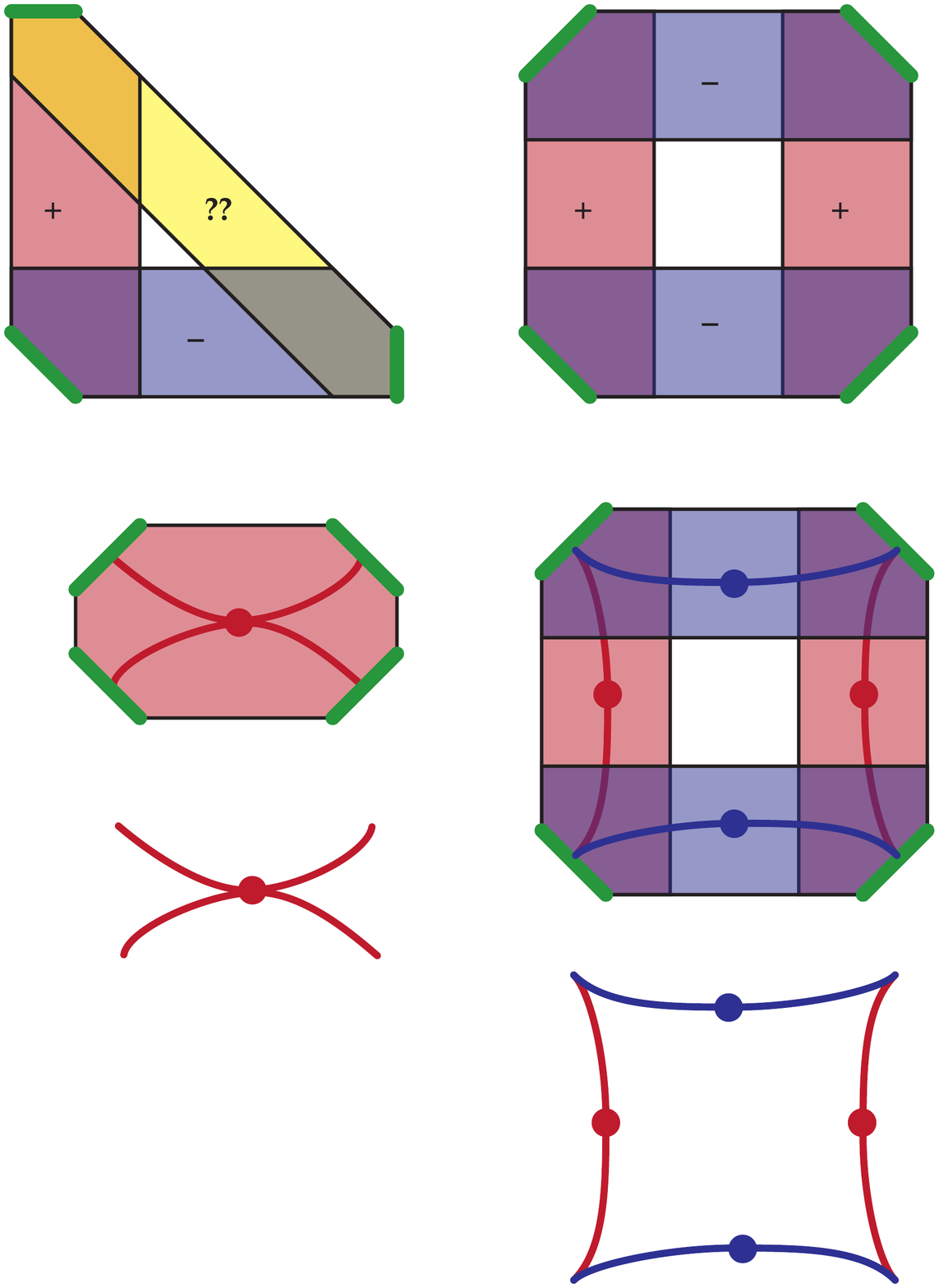} \hskip 0.7in
\includegraphics[height=1.6in]{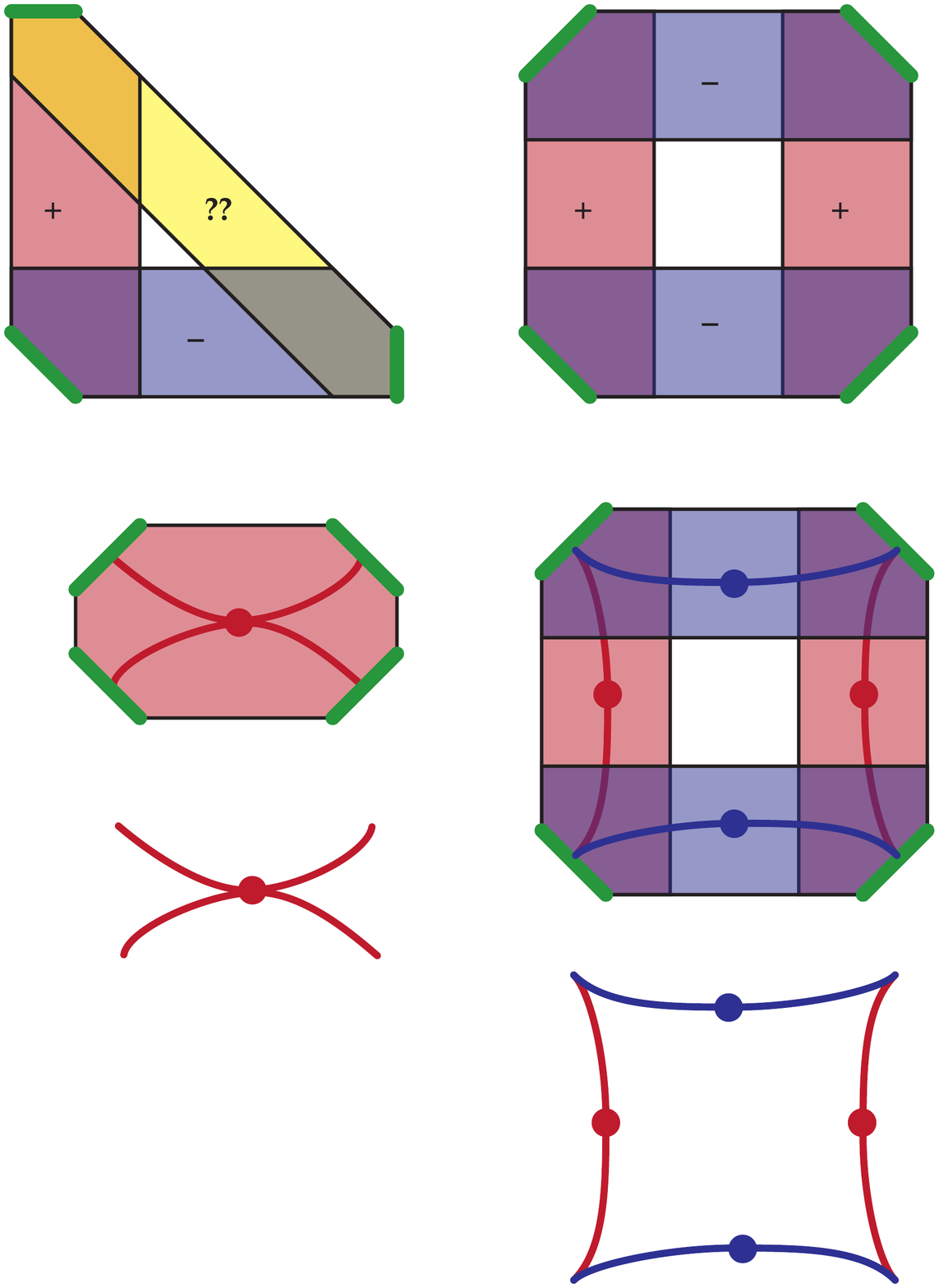}\hskip 0.1in
\includegraphics[height=1.6in]{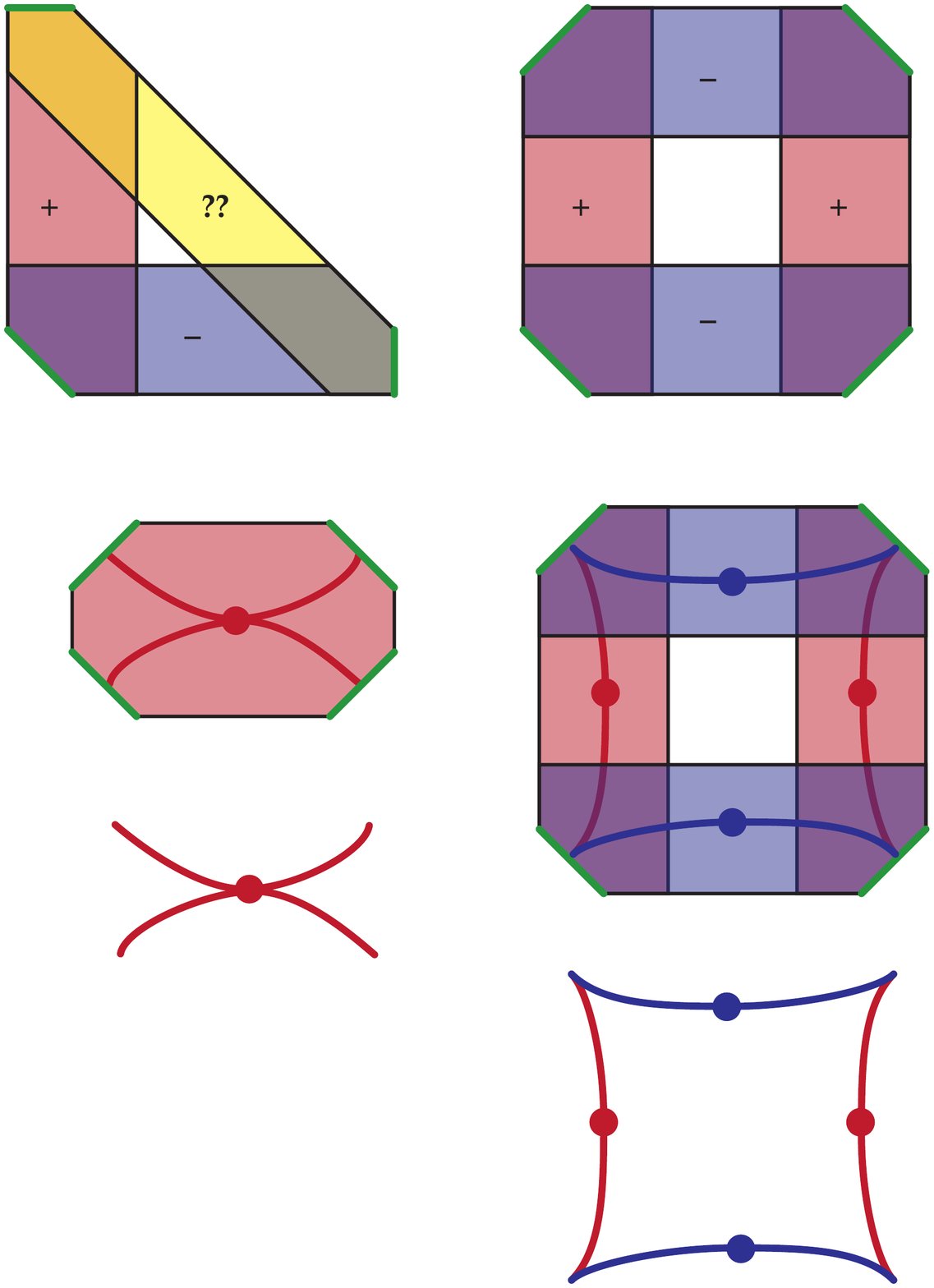}
}
\caption{Each polytope deformation retracts onto a central point and rays towards the fold facets. The orbit space $M/\T$ deformation retracts onto the template graph.}
\label{fig:retracts}
\end{figure}

\begin{theorem}\label{thm:locstd}
Suppose that $(M,Z,\omega,\Phi,\T)$ is a toric origami manifold such that each connected 
component of the folding hypersurface is co\"orientable. Then $M/\T$ is face-acyclic if and
only if 
the origami template is  acyclic.
\end{theorem}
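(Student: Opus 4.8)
The plan is to analyze the face structure of $Q = M/\T$ directly in terms of the combinatorics of the origami template, using the deformation retraction of $Q$ onto the template graph $G$ described just above the statement. First I would recall that by that retraction, $\widetilde{H}^*(Q) = \widetilde{H}^*(G)$, so $Q$ itself is acyclic if and only if $G$ is a tree; this already shows that if the template is acyclic then the top face $Q$ is acyclic, and conversely that acyclicity of $Q$ forces $G$ to be a tree. The remaining work is to understand the \emph{proper} faces $F \subsetneq Q$ and show that, when $G$ is a tree, each of them is also acyclic — and that when $G$ has a cycle, \emph{some} face (in fact $Q$ itself) fails to be acyclic. So the hard direction is: acyclic template $\Rightarrow$ every face acyclic.

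For that direction, I would argue as follows. A facet of $Q$, by the description of faces of $M/\T$ given at the end of Section~\ref{sec:background}, is obtained by taking a facet $F_0$ of some polytope $\Psi_V(v)$ that is not a fold facet, and then gluing in all the pieces of neighboring polytopes that "continue" $F_0$ across fold facets. The key observation is that the fold facets of a polytope $\Psi_V(v)$ are pairwise disjoint (condition (2) in Definition~\ref{def:template}), so a non-fold facet $F_0$ of $\Psi_V(v)$ can meet at most the fold facets it is actually adjacent to, and the combinatorial structure by which facets of $Q$ are assembled from polytope-facets is governed by a \emph{subgraph} of the template graph $G$ — namely the induced subgraph on those vertices $v$ whose polytope contributes a piece to $F$, with edges the fold facets across which the pieces are glued. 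Since $G$ is a tree, every subgraph of $G$ is a forest, hence each face $F$ deformation retracts (by the same center-point-and-rays construction, restricted to the relevant sub-polytopes and sub-facets) onto a forest, and a nonempty forest is contractible if connected — and one checks a face of $Q$ is connected — so $\widetilde{H}^*(F) = 0$. More precisely, I would set up an induction on $\dim Q - \dim F$: faces of $Q$ are themselves built by the same gluing recipe from faces of the Delzant polytopes $\Psi_V(v)$ along fold facets of those sub-polytopes, so the structure "orbit space of a toric origami manifold-like gluing diagram indexed by a subtree" is inherited, and Theorem~\ref{thm:locstd}'s easy direction applies recursively. The faces that are not fold facets in the sub-polytope are honest polytopes (contractible), and these get glued along a forest, giving contractibility.

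For the converse, suppose the template is not acyclic, i.e.\ $G$ contains a cycle. Then the deformation retraction of $Q$ onto $G$ shows $\widetilde{H}^1(Q) = \widetilde{H}^1(G) \neq 0$, so $Q$ — which counts as one of its own faces in Masuda–Panov's convention — is not acyclic, hence $Q$ is not face-acyclic. (One should note this uses only that a cycle in $G$ survives under the retraction, which is immediate from the construction: distinct polytopes retract to distinct vertices and each fold-facet gluing produces one edge.)

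The main obstacle I anticipate is the bookkeeping in the "acyclic $\Rightarrow$ every proper face acyclic" direction: one must verify carefully that the combinatorial gluing diagram for a face $F$ of $Q$ is indexed by a \emph{subgraph} of $G$ (so inherits being a forest), which relies on the disjointness axiom (2) of Definition~\ref{def:template} to rule out a single sub-polytope contributing a face that wraps around and glues to itself, and one must confirm that the induced "sub-template" still satisfies the hypotheses needed to run the deformation-retraction argument (in particular that the relevant sub-polytope faces are again Delzant-type polytopes glued along disjoint facets). Once that structural claim is nailed down, the homological conclusion is immediate from the contractibility of a forest.
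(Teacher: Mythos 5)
Your proposal is correct and follows essentially the same route as the paper: the paper's proof likewise deformation retracts each face $F$ of $M/\T$ onto the subgraph of the template graph whose vertices are the polytopes $\Psi_V(v)$ meeting $F$ and whose edges are the fold facets $\Psi_E(e)$ meeting $F$, and then observes that a face is acyclic if and only if that subgraph has no cycles. Your extra care about connectedness of faces and the disjointness axiom (2) of Definition~\ref{def:template} only makes explicit details the paper leaves implicit.
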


\begin{proof}
The orbit space $M/\T$ deformation retracts onto the template graph, and any face $F$ of $M/\T$ deformation retracts onto a subgraph: the vertices of this subgraph correpond to the polytopes $\Psi_V(v)$ which have non-empty intersection with $F$, its edges are the fold facets $\Psi_E(e)$ which have non-empty intersection with $F$.

Being homotopy equivalent to a (sub)graph, a face of $M/\T$ will be acyclic if and only if that (sub)graph has no cycles, and therefore $M/\T$ is face-acyclic exactly when the template graph has no cycles.
\end{proof}

We now can derive our Theorem~\ref{thm:even cohomology} from Masuda and Panov's work:
they prove that face-acyclic locally standard torus manifolds have no odd-degree cohomology 
\cite[Theorem 9.3]{masuda-panov}. 
While our proofs have very different flavors, it is interesting to note that
a crucial ingredient in their proof is their 
\cite[Lemma 2.3]{masuda-panov}, which is closely related to our Lemma~\ref{le:magic}, as 
described in Remark~\ref{rem:to-masuda-panov}.



\end{document}